\numberwithin{equation}{section}
\newtheorem{definition}{Definition}[section]
\newtheorem{defnprop}{Definition-Proposition}[section]
\newtheorem{proposition}[definition]{Proposition}
\newtheorem{theorem}[definition]{Theorem}
\newtheorem{corollary}[definition]{Corollary}
\newtheorem{lemma}[definition]{Lemma}
\newcommand{\be}{\begin{equation}}
\newcommand{\ee}{\end{equation}}
\newcommand{\beu}{\begin{equation*}}
\newcommand{\eeu}{\end{equation*}}
\newcommand{\bea}{\begin{eqnarray}}
\newcommand{\eea}{\end{eqnarray}}
\newcommand{\beaa}{\begin{eqnarray*}}
\newcommand{\eeaa}{\end{eqnarray*}}
\newcommand{\bmx}{\begin{pmatrix}}
\newcommand{\emx}{\end{pmatrix}}
\newcommand{\g}{{\mathfrak g}}
\newcommand{\h}{{\mathfrak h}}
\newcommand{\p}{{\mathfrak p}}
\newcommand{\finproof}{{\hfill \rule{5pt}{5pt}}}
\newcommand{\mf}{\mathfrak}
\newcommand{\nn}{\nonumber}
\newcommand{\eps}{\epsilon}
\newcommand{\tr}{\,{\rm tr}}
\newcommand{\ip}{{\cdot}}
\newcommand{\F}{{\mathscr F}}
\newcommand{\R}{{\mathcal R}}
\newcommand{\id}{{\mathrm{id}}}
\newcommand{\U}{\mathcal U}
\newcommand{\Deltat}{\tilde{\Delta}}
\newcommand{\isom}{\stackrel{\sim}{\longrightarrow}}
\newcommand{\K}{\mathbb K}
\begin{document}

\baselineskip 17.5pt
\parindent 18pt
\parskip 8pt

\begin{flushright}
\break

DCPT-08/73

\end{flushright}
\vspace{1cm}
\begin{center}
{\LARGE {\bf Triangular quasi-Hopf algebra structures}} 
\\[4mm]
{\LARGE {\bf on certain non-semisimple quantum groups}}

\vspace{1.5cm}
{\large  C. A. S. Young\footnote{\texttt{charlesyoung@cantab.net}} and R. Zegers\footnote{\texttt{robin.zegers@durham.ac.uk}}}
\\
\vspace{10mm}

{ \emph{Department of Mathematical Sciences\\ University of Durham\\
South Road, Durham DH1 3LE, UK}}

\end{center}

\vskip 1in
\centerline{\small\bf ABSTRACT}
\centerline{
\parbox[t]{5in}{\small One way to obtain Quantized Universal Enveloping Algebras (QUEAs) of non-semisimple Lie algebras is by contracting QUEAs of semisimple Lie algebras. We prove that every contracted QUEA in a certain class is a cochain twist of the corresponding undeformed universal envelope. Consequently, these contracted QUEAs possess a triangular quasi-Hopf algebra structure. As examples, we consider $\kappa$-Poincar\'e in 3 and 4 spacetime dimensions. }}


\vspace{1cm}

\newpage
\section{Introduction}
The class of quasi-triangular quasi-Hopf (qtqH) algebras, introduced by Drinfel'd \cite{Drinfeld}, {\nobreak admits} an extended notion of twisting in which the 2-cocycle condition required in the context of Hopf algebras is relaxed, \cite{Chari, Majidbook}. It is therefore possible to relate, by twisting, Hopf algebras -- with coassociative coproduct -- to quasi-Hopf algebras which may only be coassociative up to conjugation by an invertible element $\Phi$ known as the {\it coassociator}. A remarkable fact, proved in \cite{Drinfeld}, is that every qtqH quantized universal enveloping algebra (QUEA) is isomorphic to a twist of the undeformed UEA of the underlying Lie algebra; the latter being endowed with a canonical qtqH algebra structure $(\R_{\mbox{\scriptsize KZ}}, \Phi_{\mbox{\scriptsize KZ}})$, obtained from the monodromy of the Knizhnik-Zamolodchikov equation. In particular, the Drinfel'd-Jimbo QUEA of any semisimple Lie algebra, endowed with its standard quasi-triangular Hopf algebra structure, can be obtained in this way by means of an appropriate twist $\F_{\mbox{\scriptsize D}}$. As a corollary, every such QUEA also admits a triangular quasi-Hopf algebra structure, obtained by twisting $(\R_0 = 1 \otimes 1, \Phi_0 = 1 \otimes 1 \otimes 1)$ with the same canonical twist $\F_{\mbox{\scriptsize D}}$. This structure provides the isomorphisms required to make the category of representations a tensor category. As emphasized in \cite{YZ} -- see also \cite{YZ2,YZ3} -- apart from the obvious mathematical interest, the extension of this result to non-semisimple Lie algebras would provide a covariant notion of multiparticle states in quantum field theories based on certain deformations of the Poincar\'e symmetry. Unfortunately, the proof of this result relies crucially on the vanishing of a certain cohomology module, which holds for semisimple Lie algebras but may fail for non-semisimple Lie algebras. This precludes any systematic extension to the latter and, to our knowledge, the question of the existence of a qtqH algebra structure (triangular or not) on a general non-semisimple QUEA remains open.

There is a class of non-semisimple Lie algebras that is nonetheless closely related to semisimple Lie algebras. It consists of all the Lie algebras obtained by {\it contracting} semisimple Lie algebras. As we shall discuss, given a symmetric decomposition $\g=\h \oplus\p$ of the semisimple Lie algebra $\g$, an {\it In\"onu-Wigner contraction} of $\g$ can be performed by rescaling the submodule $\p$ with respect to the subalgebra $\h$ and taking a singular limit. In the limit, the submodule $\p$ is contracted to an abelian ideal of the contraction $\g_0$ of $\g$, thus making $\g_0$ non-semisimple. Whenever the contraction procedure is non-singular at the level of the QUEA, this yields a complete QUEA structure $(\U_\kappa (\g_0), \Delta_\kappa)$ based on the non-semisimple Lie algebra $\g_{0}$, where $\kappa$ is a rescaled deformation parameter. In this paper, we consider a certain class of QUEAs obtained in this way. We prove that every such QUEA is isomorphic to a twist of the corresponding undeformed UEA by an invertible element $\F_0 \in \U_\kappa (\g_{0} ) \otimes \U_\kappa (\g_{0})$ obtained as the contraction of the canonical twist $\F_{\mbox{\scriptsize D}}$ of $\g$. By twisting the trivial triangular quasi-Hopf algebra structure with $\F_0$, we prove the existence of a triangular quasi-Hopf algebra structure $(\R_\kappa, \Phi_\kappa)$ on the non-semisimple QUEA $(\U_\kappa \g_{0}, \Delta_\kappa)$. That is, we prove the existence of the bottom line of the following diagram.
$$\xymatrixcolsep{5pc} \xymatrix{
\left( \U\g, \Delta_0, \R_{\mbox{\scriptsize KZ}}, \Phi_{\mbox{\scriptsize KZ}} \right ) \ar[r]^{\F_{\mbox{\scriptsize D}}} & \left( \U_q\g, \Delta_q, \R_q, 1^{\otimes 3} \right )\\
\left( \U\g, \Delta_0, 1^{\otimes 2}, 1^{\otimes 3} \right ) \ar[d]_{\mbox{Contraction}} \ar[r]^{\F_{\mbox{\scriptsize D}}} & \left( \U_q \g, \Delta_q, \R, \Phi \right )\ar[d]\\
\left( \U(\g_0), \Delta_0, 1^{\otimes 2}, 1^{\otimes 3} \right ) \ar[r]^{\F_0} & \left( \U_\kappa (\g_0), \Delta_\kappa, \R_\kappa, \Phi_\kappa \right )}$$

The paper is organised as follows. In section \ref{SecI}, we recall the definition of {\it symmetric} semisimple Lie algebras. The important notion of contractibility is introduced in section \ref{SecII} after a brief reminder of the definitions of the filtered and graded algebras associated to UEAs. We also define the class of symmetric spaces to which our results will apply, namely those possessing what we shall call the restriction property. Section \ref{SecIII} is dedicated to the cohomology of associative algebras and Lie algebras. After a brief account of Hochschild and Chevalley-Eilenberg cohomology, we introduce the notion of contractible Chevalley-Eilenberg cohomology. We establish, in particular, the vanishing of the first contractible Chevalley-Eilenberg cohomology module for symmetric semisimple Lie algebras possessing the restriction property. This will be crucial in proving the existence of a contractible twist. In section \ref{SecIV}, the usual rigidity theorems for semisimple Lie algebras are then refined, with special regards to the contractibility of the structures. We construct, in particular, a contractible twist from every contractible QUEA of restrictive type to the undeformed UEA of the underlying Lie algebra. The actual contraction is performed in section \ref{SecV}. Section \ref{SecVI} contains the examples that form the main motivation for the present work, namely the $\kappa$-deformations of $\U(\mathfrak{iso}(3,\mathbb C))$ and $\U(\mathfrak{iso}(4,\mathbb C))$, whose real forms give rise to the $\kappa$-deformations of the Euclidean and Poincar\'e algebras in three and four dimensions, \cite{Celeghini, Lukierski}. The latter has indeed received considerable interest as a possible deformation of the Poincar\'e symmetries of space-time -- see {\it e.g.} \cite{Lots} and references therein.

Throughout this paper $\K$ denotes a field of characteristic zero.

\section{Symmetric decompositions of Lie algebras}
\label{SecI}
Let us briefly review some well-known facts concerning symmetric semisimple Lie algebras. Following \cite{Dixmier,Helgasonbook}, we have 
\begin{definition}
A symmetric Lie algebra is a pair $(\g, \theta)$, where $\g$ is a Lie algebra and $\theta:\g\rightarrow \g$ is an involutive (i.e. $\theta\circ\theta = \id$ and $\theta\neq \id$) automorphism of Lie algebras.
\end{definition}
As $\theta \circ \theta = \id$, the eigenvalues of $\theta$ are $+1$ and $-1$. Let $\h = \ker \left (\theta-\id \right )$ and $\p = \ker \left ( \theta + \id\right )$ be the corresponding eigenspaces. Every such $\theta$ thus defines a {\it symmetric decomposition} of $\g$, {\it i.e.} a triple $(\g,\h ,\p)$ such that
\begin{itemize}
\item{$\h \subset \g$ is a Lie subalgebra;}
\item{$\g = \h \oplus \p$ as $\K$-modules;}
\item{$[\h ,\p] \subseteq \p$ and $[\p,\p] \subseteq \h$. }
\end{itemize}
Any Lie subalgebra $\h$ of $\g$ that is the fixed point set of some involutive automorphism will be referred to as a {\it symmetrizing} subalgebra. If, in addition, $\g$ is semisimple then $\p$ must be the orthogonal complement of $\h$ in $\g$ with respect to the (non-degenerate) Killing form, and thus every given symmetrizing subalgebra $\h$ uniquely determines $\p$ and hence $\theta$.
In this case, we shall refer to $(\g, \h)$ as a {\it symmetric pair}.

A symmetric semisimple Lie algebra  $(\g, \theta)$ is said to be \textit{diagonal} if $\g = {\mf v} \oplus {\mf v}$ for some semisimple Lie algebra $\mf{v}$ and $\theta (x,y) = (y,x)$ for all $(x,y) \in \g$. A symmetric Lie algebra {\it splits} into symmetric subalgebras $(\g_i, \theta_i)_{i \in I}$ if $\g= \bigoplus_{i\in I} \g_i$ and the restrictions $\theta_{|\g_i} = \theta_i$ for all $i \in I$.
\begin{lemma}
\label{gvv}
Every symmetric semisimple Lie algebra $(\g, \theta)$ splits into a diagonal symmetric Lie algebra $(\g_d, \theta_d)$ and a collection of symmetric simple Lie subalgebras $(\g_i, \theta_i)_{i\in I}$.
\end{lemma}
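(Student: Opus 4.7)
The plan is to combine the standard structure theory of semisimple Lie algebras with the observation that a Lie algebra involution permutes simple ideals. First I would use the classical fact that a semisimple $\g$ admits a unique decomposition $\g = \bigoplus_{j \in J} \mf{s}_j$ into simple ideals. Because $\theta$ is a Lie algebra automorphism it sends ideals to ideals, and simplicity is preserved under isomorphism, so $\theta$ induces a permutation $\sigma$ of the index set $J$. The hypothesis $\theta\circ\theta = \id$ forces $\sigma^2 = \id$, hence every $\sigma$-orbit has size $1$ or $2$.

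Next I would treat the two types of orbits separately. For a size-$2$ orbit $\{j,j'\}$ one has $\mf{s}_{j'} = \theta(\mf{s}_j)$, so $\phi := \theta|_{\mf{s}_j} \colon \mf{s}_j \isom \mf{s}_{j'}$ is an isomorphism of simple Lie algebras, and $\theta|_{\mf{s}_{j'}} = \phi^{-1}$ because $\theta^2 = \id$. Setting $\mf v := \mf{s}_j$ and identifying $\mf{s}_{j'}$ with $\mf v$ via $\phi^{-1}$, a short computation shows that $\theta$ is carried to the flip $(x,y)\mapsto (y,x)$ on $\mf v \oplus \mf v$; thus $(\mf{s}_j\oplus\mf{s}_{j'},\,\theta|_{\mf{s}_j\oplus\mf{s}_{j'}})$ is a diagonal symmetric Lie algebra. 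Collecting all such pairs gives $(\g_d,\theta_d)$. For a size-$1$ orbit, $\mf{s}_i$ is $\theta$-stable and $\theta|_{\mf{s}_i}$ is an involutive automorphism of a simple Lie algebra; whenever $\theta|_{\mf{s}_i} \neq \id$, this provides one of the symmetric simple summands $(\g_i,\theta_i)$.

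Putting the pieces together, the direct-sum decomposition $\g = \g_d \oplus \bigoplus_i \mf{s}_i$ is respected by $\theta$ by construction, which yields the required splitting into a diagonal symmetric Lie algebra and symmetric simple summands.

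The only delicate point I foresee is bookkeeping for size-$1$ orbits on which $\theta$ happens to restrict to the identity: these simple ideals sit entirely inside $\h$ and, strictly speaking, do not fit the definition of a symmetric Lie algebra (which forbids $\theta = \id$). I would handle this either by absorbing such ideals into a trivially symmetric summand of $\g_d$, or by agreeing that the collection $(\g_i,\theta_i)_{i\in I}$ in the statement may include trivial components; the rest of the argument is unaffected, and the global hypothesis $\theta \neq \id$ guarantees at least one non-trivial summand.
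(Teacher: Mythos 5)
Your proof is correct: the paper itself gives no argument for this lemma, deferring to Chapter~8 of Helgason, and your route --- decomposing $\g$ into simple ideals, noting that $\theta$ permutes them with orbits of size one or two, and turning size-two orbits into diagonal factors and size-one orbits into simple symmetric factors --- is precisely the standard argument found there. Your closing remark about simple ideals on which $\theta$ restricts to the identity is a genuine edge case that the paper's definitions leave implicit (such an ideal lies entirely in $\h$ and is invisible to $[\p,\p]$, which also matters for the subsequent lemma on $\mathrm{span}[\p,\p]=\h$), and your proposed bookkeeping is the right way to handle it.
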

A proof can be found in Chap. 8 of \cite{Helgasonbook}. Lemma \ref{gvv} allows for a complete classification of the symmetric semisimple Lie algebras; see \cite{Cartan, Helgasonbook}. It also follows that we have the following

\begin{lemma}
\label{pph}
Let $(\g, \theta)$ be a symmetric semisimple Lie algebra and let $\g = \h \oplus \p$ be the associated symmetric decomposition of $\g$. Then $\h$ is linearly generated by $[\p, \p]$.
\end{lemma}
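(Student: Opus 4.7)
The plan is to apply Lemma \ref{gvv} to reduce the problem to two elementary cases. Since the splitting $\g = \g_d \oplus \bigoplus_{i \in I} \g_i$ is $\theta$-stable (each summand being $\theta$-invariant with $\theta$ restricting to the $\theta_j$ of Lemma \ref{gvv}), the symmetric decomposition refines as $\h = \h_d \oplus \bigoplus_i \h_i$ and $\p = \p_d \oplus \bigoplus_i \p_i$, and the commutator $[\p,\p]$ decomposes as $[\p_d,\p_d] \oplus \bigoplus_i [\p_i,\p_i]$. It therefore suffices to establish the claim on the diagonal summand and on each simple symmetric summand separately.

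I would handle the diagonal case $\g_d = \mf{v} \oplus \mf{v}$ with $\theta_d(x,y) = (y,x)$ by direct computation. Here $\p_d = \{(x,-x) : x \in \mf v\}$ and $[(x,-x),(y,-y)] = ([x,y],[x,y])$, so $[\p_d,\p_d]$ equals the diagonal copy of $[\mf v, \mf v]$, which is $\mf v$ itself because $\mf v$ is semisimple, and this coincides with $\h_d$.

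For each simple symmetric summand $\g_i$, I would set $W := [\p_i,\p_i] \subseteq \h_i$ and show that $W + \p_i$ is a Lie ideal of $\g_i$; simplicity of $\g_i$ together with $\p_i \neq 0$ (which follows from $\theta_i \neq \id$) then forces $W + \p_i = \g_i$ and hence $W = \h_i$. The main point requiring care is the Jacobi verification of the ideal property, which is easy to bungle into a circular argument if attempted too hastily. The clean way is to expand $[h,[p,p']] = [[h,p],p'] + [p,[h,p']]$ for $h \in \h_i$, $p,p' \in \p_i$, and observe that both terms on the right lie in $[\p_i, \p_i] = W$, giving $[\h_i, W] \subseteq W$; an analogous expansion, together with $[\p_i,\p_i] \subseteq \h_i$ and $[\h_i,\p_i] \subseteq \p_i$, yields $[\p_i, W] \subseteq \p_i$, completing the ideal property.
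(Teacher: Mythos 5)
Your proposal is correct and follows essentially the same route as the paper: reduce via Lemma \ref{gvv} to the diagonal and simple cases, treat the diagonal case by the direct computation $[(x,-x),(y,-y)]=([x,y],[x,y])$ together with perfectness of $\mf v$, and treat the simple case by showing $\mathrm{span}[\p_i,\p_i]\oplus\p_i$ is a nonzero ideal of $\g_i$. The only (harmless) differences are that you spell out the Jacobi verification of the ideal property, which the paper leaves implicit, and you use $\mathrm{span}[\mf v,\mf v]=\mf v$ where the paper invokes the stronger fact that every element of $\mf v$ is a single commutator.
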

\begin{proof}
By virtue of lemma \ref{gvv}, it suffices to prove this result on symmetric simple Lie algebras and on diagonal symmetric Lie algebras. Let us first assume that $\g$ is simple. The linear span of $[\p, \p]$ defines a non-trivial ideal in $\h$ and span$([\p, \p]) \oplus \p$ therefore defines a non-trivial ideal in $\g$. If we assume that $\g$ is simple, it immediately follows that span$([\p, \p]) = \h$. Suppose now that $(\g, \theta)$ is a diagonal symmetric Lie algebra, {\it i.e.} that there exists a semisimple Lie algebra ${\mathfrak v}$ such that $\g = {\mathfrak v} \oplus {\mathfrak v}$ and $\theta (x,y) = (y,x)$ for all $(x,y)\in \g$. In this case, we have a symmetric decomposition $\g = \h \oplus \p$, where $\h$ is the set of elements of the form $(x,x)$ for all $x \in {\mathfrak v}$, whereas $\p$ is the set of elements of the form $(x,-x)$ for all $x \in {\mathfrak v}$. We naturally have $[\p,\p]\subseteq \h$. Now, as ${\mathfrak v}$ is semisimple, it follows that for every $x \in {\mathfrak v}$, there exist $y, z \in {\mathfrak v}$ such that $x = [y,z]$. Then for all $(x,x) \in \h$, we have $(x,x) = ([y,z],[y,z]) = [(y,y),(z,z)] = [(y,-y),(z,-z)]$. But both $(y,-y)$ and $(z,-z)$ are in $\p$. \finproof 
\end{proof}

\section{Contractible QUEAs}
\label{SecII}

\subsection{Filtrations of the Universal Enveloping Algebra}
\label{FUEA}
Given a Lie algebra $\g$ over $\K$, its universal enveloping algebra $\U \g$ is defined as the quotient of the graded tensor algebra $T\g = \bigoplus_{n \geq 0} \g^{\otimes n}$ by the two-sided ideal ${\mathcal I}(\g)$ generated by the elements of the form $x \otimes y - y \otimes x - [x,y]$, for all $x,y \in \g$. This quotient constitutes a filtered $\K$-algebra, {\it i.e.} there exists an increasing sequence
\be
\label{filtreseq}
\{ 0 \} \subset F_0(\U \g) \subset \cdots \subset F_n(\U \g) \subset \cdots \subset \U \g \, ,
\ee
such that~\footnote{Although $F_n(\U \g) \cdot F_m(\U \g)$ is usually strictly contained in $F_{n+m}(\U \g)$, it linearly generates the latter.}
\be
\label{filtrecond}
\U \g = \bigcup_{n \geq 0} F_n (\U \g) \qquad \mbox{and} \qquad F_n(\U \g) \cdot F_m(\U \g) \subset F_{n+m}(\U \g) \, .
\ee
The elements of this sequence are, for all $n \in {\mathbb N_0}$,
\be
F_n(\U \g) = \bigoplus_{m=0}^n \g^{\otimes m} / {\mathcal I}(\g) \, .
\ee
In particular, $F_0(\U\g)=\K$ and $F_1(\U\g) = \K \oplus \g$. Let us identify $\g$ with its image under the canonical inclusion $\g\hookrightarrow \U(\g)$, and further write $x_1 \cdots x_n$ for the equivalence class of $x_1 \otimes \cdots \otimes x_n$. In this notation, $F_n(\U\g)$ is linearly generated by elements that can be written as words composed of at most $n$ symbols from $\g$.

We define the left action of $\g$ on $\g^{\otimes n}$ by extending the adjoint action $x\triangleright x'=\left[x,x'\right]$ of $\g$ on $\g$ as a derivation:
\be
\label{leftgaction0}
x \triangleright \left ( x_1 \otimes \cdots \otimes x_n \right ) = \sum_{i=1}^{n} x_1 \otimes \cdots \otimes [x,x_i] \otimes \cdots \otimes x_n  \in \g^{\otimes n} \, ,
\ee
for all $x, x_1, \dots, x_n \in \g$. 
In this way we endow $T\g$ with the structure of a left $\g$-module.
As the ideal ${\mathcal I}(\g)$ is stable under this action, the $F_n(\U \g)$ are also left $\g$-modules. We therefore have a filtration of $\U\g$ not only as a $\K$-algebra, but also as a left $\g$-module.

We will also need such a filtration on $\left (\U\g \right )^{\otimes 2}$. In fact, for all $m\in\mathbb N_0$, there is a $\K$-algebra filtration on the universal envelope $\U(\g^{\oplus m})$ of the Lie algebra $\g^{\oplus m}$, as defined above. If we endow $\g^{\oplus m}$ with the structure of a left $\g$-module according to
\be x\triangleright (x_1,\dots,x_m) := \left(\left[x,x_1\right],\dots, \left[x,x_m\right]\right)\,,\label{diagaction}\ee 
and extend this action to all of  $\U(\g^{\oplus m})$ as a derivation, then we have a filtration of $\U(\g^{\oplus m})$ as a left $\g$-module. But there is a natural isomorphism 
\be\label{UGm}\rho_m:\U(\g^{\oplus m})\isom \left(\U\g \right )^{\otimes m}\ee of $\K$-algebras (see e.g. \cite{Dixmier} section 2.2). This induces a left action of $\g$ on $\left(\U\g \right )^{\otimes m}$ and a filtration of $\left(\U\g \right )^{\otimes m}$ as a left $\g$-module. We write the elements of this filtration as $F_n\left( \left(\U\g \right )^{\otimes m}\right)$.

Given now any symmetric decomposition \be\g = \h \oplus \p,\ee there is an associated bifiltration $\left (F_{n,m}(\U\g) \right )_{n,m\in{\mathbb N_0}}$ of $\U\g$, {\it i.e.} a doubly increasing sequence
\be
F_{n,m}(\U\g) \subset  F_{n+1,m}(\U\g)  \qquad \mbox{and} \qquad   F_{n,m}(\U\g) \subset  F_{n,m+1}(\U\g) \, , 
\ee
such that
\be
\U\g = \bigcup_{n, m \geq 0} F_{n,m}(\U\g) \qquad \mbox{and}  \quad F_{n,m}(\U\g) \cdot F_{k,l}(\U\g) \subset F_{n+k,m+l}(\U\g) \, ,
\ee
for all $n,m,k,l \in {\mathbb N_0}$. The elements of this sequence are, for all $n,m \in {\mathbb N_0}$,
\be
F_{n,m}(\U\g) = \bigoplus_{p=0}^n \bigoplus_{q=0}^m  \mbox{Sym} \left ( \h^{\otimes p} \otimes \p^{\otimes q} \right ) /{\mathcal I} (\g) \, ,
\ee
where, for all $n \in {\mathbb N_0}$ and all $\K$-submodules $X_1, \dots X_n \subset \g$,
\be
\mbox{Sym} (X_1 \otimes \cdots \otimes X_n) = \bigoplus_{\sigma \in \Sigma_n} X_{\sigma(1)} \otimes \cdots \otimes X_{\sigma(n)}
\ee
is the direct sum over all permutations of submodules in the tensor product. Each $F_{n,m}(\U\g)$ is therefore the left $\h$-module linearly generated by elements of $\U\g$ that can be written as words containing at most $n$ symbols in $\h$ and at most $m$ symbols in $\p$. In particular, $F_{1,0}(\U\g) = \K \oplus \h$ and $F_{0,1}(\U\g)=\K\oplus \p$. We also have, for all $m,n \in {\mathbb N_0}$,
\be
\label{sumS}
F_{n,m}(\U\g) \subset F_{n+m}(\U\g) \qquad \mbox{and} \qquad F_n(\U\g) = \bigcup_{m=0}^n F_{n-m,m}(\U\g) \, .
\ee
In complete analogy with the $F_n((\U\g)^{\otimes m})$, we can construct bifiltrations $F_{n,p}((\U\g)^{\otimes m})$ of all the $m$-fold tensor products of $\U\g$.

\subsection{Symmetric tensors}
Let $S(\g)$ be the graded algebra associated to the filtration of $\U(\g)$ by setting, for all $n \in {\mathbb N}_0$,
\be S_n(\g) = F_n(\U \g) / F_{n-1} (\U \g) \qquad \mbox{and} \qquad S(\g) = \bigoplus_{n\geq 0} S_n(\g) \, . \ee
Since the $F_n(\U \g)$ are left $\g$-modules, so are the $S_n(\g)$. The {\it symmetrization map}, $\mbox{sym} : S(\g) \rightarrow \U\g$, defined by
\be
\mbox{sym}(x_1 \cdots x_n ) = \frac{1}{n!} \sum_{\sigma \in S_n} x_{\sigma(1)} \cdots x_{\sigma(n)}
\ee
for all $n \in {\mathbb N}_0$ and all $x_1, \dots, x_n \in \g$, constitutes an isomorphism of left $\g$-modules~\footnote{Recall that we assume $\K$ has characteristic zero.}. The image of a given $S_n(\g)$ through sym is the $\g$-module of symmetric tensors in $\g^{\otimes n}$.

If now $\g = \h \oplus \p$ is a symmetric decomposition, let
\be S_{m,n}(\g) =  F_{m,n}(\U\g) / F_{m+n-1}(\U\g) \, ,\ee
for all $m,n \in {\mathbb N}_0$. These obviously constitute left $\h$-modules. As such, they are isomorphic to the left $\h$-modules of symmetric tensors in the $\mbox{Sym}\left( \h^{\otimes m} \otimes \p^{\otimes n}\right)$, which are linearly generated by totally symmetric words with exactly $m$ symbols in $\h$ and exactly $n$ symbols in $\p$. Note that
these $\h$-modules are mixed under the left $\p$-action. Indeed, let $m, n \in {\mathbb N_0}$ be two non-negative integers and let $x\in S_{m,n}(\g)$. We have:
\begin{itemize}
 \item if $m>0$ and $n=0$, then $\p \triangleright x \in S_{m-1,n+1}(\g)$;
 \item if $m>0$ and $n>0$, then $\p \triangleright x \in S_{m+1,n-1}(\g) \oplus S_{m-1,n+1}(\g)$;
 \item if $m=0$ and $n>0$, then $\p \triangleright x \in S_{m+1,n-1}(\g)$.
\end{itemize}
This is better represented by the following diagram in $S_{m+n}(\g)$.
$$
\xymatrix{
\cdots \ar@{-->}[rd]^{\p\triangleright}& S_{m+1,n-1}(\g) \ar@{-->}[ld]^{\p\triangleright} \ar@{-->}[rd]^{\p\triangleright} \ar[d]^{\h \triangleright} & S_{m,n}(\g) \ar@{-->}[ld]^{\p \triangleright} \ar@{-->}[rd]^{ \p \triangleright} \ar[d]^{\h \triangleright }
& S_{m-1,n+1}(\g) \ar@{-->}[ld]^{\p \triangleright} \ar@{-->}[rd]^{\p \triangleright} \ar[d]^{\h \triangleright } & S_{m-2,n+2}(\g) \ar@{-->}[ld]^{ \p \triangleright}\ar[d]^{\h \triangleright } \ar@{-->}[rd]^{\p\triangleright}& \cdots \ar@{-->}[ld]^{\p\triangleright}\\
\cdots & S_{m+1,n-1}(\g) & S_{m,n}(\g) & S_{m-1,n+1}(\g) & S_{m-2,n+2}(\g) & \cdots}$$

Using the action (\ref{diagaction}) of $\g$ on $\g^{\oplus m}$ we have entirely analogous structures for $\g^{\oplus m}$ with
\be S_{n,p}(\g^{\oplus m}) = F_{n,p}(\U(\g^{\oplus m}))/ F_{n+p-1}(\U(\g^{\oplus m})) \, .\ee
In view of (\ref{UGm}), it follows that
\be \label{snp} S_{n,p}(\g^{\oplus m}) \cong F_{n,p}\left((\U \g)^{\otimes m})\right) / F_{n+p-1} \left((\U \g)^{\otimes m})\right) \ee
for all $n,p\in\mathbb N_0$. We shall therefore identify each $S_{n,p}(\g^{\oplus m})$ with the left $\h$-module of symmetric tensors on $(\U\g)^{\otimes m}$ containing exactly $n$ factors in $\h$ and $p$ in $\p$.

\subsection{Symmetric invariants and the restriction property}
For all $n, p \in {\mathbb N_0}$, let $S_n(\g\oplus \g)^\g$ be the set of $\g$-invariant elements of the left $\g$-module $S_n(\g \oplus \g)$ and let $S_{n,p}(\g \oplus \g)^\h$ denote the set of $\h$-invariant elements of the left $\h$-module $S_{n,p}(\g \oplus \g)$. We have the following two lemmas.
\begin{lemma}
\label{lemmafact}
Let $n$ and $p$ be positive integers. Every $x \in S_{n-p,p}(\g\oplus\g)^\h$ such that $\p \triangleright x \in S_{n-p+1,p-1}(\g\oplus\g)$ is in the linear span of $S_{n-p,0}(\g\oplus \g)^\g \, S_{0,p}(\g \oplus \g)^\h$.
\end{lemma}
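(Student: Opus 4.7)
My plan is to reduce the statement to a concrete linear-algebra problem by exploiting the bigraded tensor structure of the symmetric algebra together with the representation theory of $\h$. First, I would use the splitting $\g \oplus \g = (\h\oplus\h) \oplus (\p\oplus\p)$ to identify $S_{n-p,p}(\g \oplus \g)$, as an $\h$-module, with $V \otimes W$, where $V := S_{n-p}(\h \oplus \h)$ and $W := S_p(\p \oplus \p)$. Under this identification the $\h$-action is the diagonal derivation, while the $\p$-action is a derivation splitting as $\delta_V + \delta_W$: the piece $\delta_V$ acts on $V$ through $[\p,\h] \subseteq \p$, yielding the bidegree $(n-p-1,p+1)$ part of $\p \triangleright x$, while $\delta_W$ acts on $W$ through $[\p,\p] \subseteq \h$, yielding the bidegree $(n-p+1,p-1)$ part. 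The hypothesis on $x$ is precisely $\delta_V(x) = 0$.

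Since $\g$ is semisimple and $\h$ is a symmetrizing subalgebra of $\g$, $\h$ is reductive and the relevant finite-dimensional $\h$-modules decompose completely. Decomposing $V$ and $W$ into $\h$-isotypic components gives $[V \otimes W]^\h = \bigoplus_\lambda [V^{(\lambda)} \otimes W^{(\lambda)*}]^\h$, and the $\h$-equivariance of $\delta_V$ allows me to analyze each isotype of $x$ separately. For the trivial isotype the relevant subspace is $V^\h \otimes W^\h = [S_{n-p}(\h\oplus\h)]^\h \otimes S_{0,p}(\g\oplus\g)^\h$; choosing a basis $\{b_\alpha\}$ of $W^\h$ I write this component of $x$ as $x_0 = \sum_\alpha a_\alpha \otimes b_\alpha$ with $a_\alpha \in V^\h$, so that $\delta_V(x_0) = 0$ reads $\sum_\alpha (\delta_V a_\alpha) \cdot b_\alpha = 0$ in $V' \otimes S_{p+1}(\p \oplus \p)$, where $V' = S_{n-p-1}(\h \oplus \h)$ and $\cdot$ denotes the symmetric multiplication.

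Here I would invoke Lemma \ref{pph}: since $\h = \mbox{span}[\p,\p]$ and the $b_\alpha$ are linearly independent $\h$-invariants, I expect the symmetric multiplication map $(\p\oplus\p) \otimes W^\h \to S_{p+1}(\p\oplus\p)$ to be sufficiently faithful to separate the $\delta_V a_\alpha$ contributions and force $\delta_V a_\alpha = 0$ for each $\alpha$. Each $a_\alpha$ is then $\g$-invariant, so $a_\alpha \in S_{n-p,0}(\g\oplus\g)^\g$, giving the desired factorization. A parallel argument, again invoking Lemma \ref{pph}, should show that the non-trivial isotypic components of $x$ must vanish under the hypothesis.

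The principal obstacle is the step extracting individual vanishing of each $\delta_V a_\alpha$ from the single cancellation relation: a priori the summands could cancel against one another, and ruling this out requires genuinely using both the $\h$-invariance and linear independence of the $b_\alpha$ together with the spanning property $\h = \mbox{span}[\p,\p]$. A secondary difficulty is organizing a clean treatment of the non-trivial $\h$-isotypes in the decomposition of $x$ so that they too are forced to the required product form (or to zero).
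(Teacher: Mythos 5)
Your skeleton coincides with the paper's up to a point: both identify $S_{n-p,p}(\g\oplus\g)$ with $S_{n-p}(\h\oplus\h)\otimes S_p(\p\oplus\p)$, both observe that the hypothesis kills exactly the component of $\p\triangleright x$ obtained by acting on the $\h$-side factors (your $\delta_V(x)=0$), and both use lemma \ref{pph} to promote $\p$-invariance of an element of $S_{n-p}(\h\oplus\h)$ to $\g$-invariance via $[P,P']\triangleright w = P\triangleright(P'\triangleright w)-P'\triangleright(P\triangleright w)$. But the step you yourself flag as the ``principal obstacle'' is the actual content of the lemma, and your proposal does not close it. From $\sum_\alpha (\delta_V a_\alpha)\cdot b_\alpha = 0$ one cannot conclude $\delta_V a_\alpha = 0$ merely from linear independence of the $b_\alpha$: the multiplication map $(\p\oplus\p)\otimes S_p(\p\oplus\p)\to S_{p+1}(\p\oplus\p)$ has a large kernel (already $q\otimes q' - q'\otimes q \mapsto 0$), and restricting its second argument to the subspace spanned by the $b_\alpha$ does not obviously make it injective. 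Lemma \ref{pph}, which you propose to invoke here, is a statement about $[\p,\p]$ spanning $\h$ and says nothing about this injectivity question, so it is the wrong tool for this step. As written, this is a genuine gap, not a deferred technicality.

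The isotypic decomposition is a second, self-inflicted gap. Because you split $x$ by $\h$-isotype first, you must separately prove that every nontrivial isotypic component vanishes, which is essentially the lemma over again and for which you only assert ``a parallel argument.'' The paper avoids this entirely by ordering the two invariance arguments the other way around: expand $x=\sum_j c_j\, m_j$ over the \emph{full} monomial basis $m_j = p_{j_1}\cdots p_{j_p}$ of $S_p(\p\oplus\p)$, with the coefficients $c_j\in S_{n-p}(\h\oplus\h)$ carrying no invariance assumption; deduce from the vanishing of the $(n-p-1,p+1)$ component that $\p\triangleright c_j=0$ for every $j$, hence $c_j\in S_{n-p,0}(\g\oplus\g)^\g$ by lemma \ref{pph}; only then re-expand $x=\sum_k y_k w_k$ over a basis $(y_k)$ of $S_{n-p,0}(\g\oplus\g)^\g$ and use $\h\triangleright x=0$, $\h\triangleright y_k=0$ and linear independence of the $y_k$ to force $w_k\in S_{0,p}(\g\oplus\g)^\h$. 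No isotypic analysis is needed. Note that even in the paper the separation $\p\triangleright c_j=0$ is obtained by reading off coefficients against the monomial basis of $S_{p+1}(\p\oplus\p)$ and is stated rather tersely; your instinct that this is the delicate point is sound, but a proof must supply that argument rather than record it as an obstacle.
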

\begin{proof}
Let $(h_i)_{i \in I}$ and $(p_j)_{j \in J}$ be ordered bases of $\h\oplus\h$ and $\p\oplus\p$ respectively. Every element $x \in S_{n-p,p}(\g\oplus\g)$ can be written as
\be
x = \sum_{i_1 \leq \cdots \leq i_{n-p}} \sum_{j_1 \leq \cdots \leq j_{p}} x_{i_{1} \dots i_{n-p} j_{1} \dots j_{p}} \, h_{i_1} \dots  h_{i_{n-p}} p_{j_1} \dots p_{j_{p}} \, , \nonumber
\ee
where, for all $i_{1}, \dots, i_{n-p} \in I$ and $j_{1}, \dots, j_{p} \in J$, $x_{i_{1} \dots i_{n-p} j_{1} \dots j_{p}} \in \K$. Then, omitting the ordered sums, we have
\be
\p \triangleright x = x_{i_{1} \dots i_{n-p} j_{1} \dots j_{p}} \, \left  [\p \triangleright \left ( h_{i_1} \dots h_{i_{n-p}} \right ) \, p_{j_1} \dots p_{j_{p}} + h_{i_1} \dots h_{i_{n-p}}  \, \p \triangleright \left ( p_{j_1} \dots p_{j_{p}} \right ) \right ] \, . \nonumber
\ee
Since $\left(\p \triangleright x\right) \cap S_{n-p-1,p+1}(\g\oplus\g) = \{0\}$, we have
\be\nn\p \triangleright \left ( x_{i_{1} \dots i_{n-p} j_{1} \dots j_{p}} \, h_{i_1} \dots h_{i_{n-p}} \right ) =0,\ee
for all $j_1 \leq \dots \leq j_{p} \in J$; it follows that this quantity is also invariant under $\left[ \p,\p\right]$ and hence, by lemma \ref{pph}, under $\h$. Thus it is actually $\g$-invariant. Introduce a basis $(y_k)_{k \in K}$ of the $\K$-module $S_{n-p,0}(\g\oplus\g)^\g$, so that we can write
\be
x_{i_{1} \dots i_{n-p} j_{1} \dots j_{p}} \, h_{i_1} \dots h_{i_{n-p}} = \sum_{k \in K} b_{k \, j_{1} \dots j_{p}} \, y_k\, , \nonumber
\ee
with $b_{k \, j_1 \dots j_p} \in \K$, for all $j_1 \leq \dots \leq j_{p} \in J$. Now, as $x$ is $\h$-invariant, we also have
\be
\h \triangleright x = b_{k \, j_{1} \dots j_{p}} \,  y_k\, \h \triangleright \left (p_{j_1} \dots p_{j_{p}} \right ) = 0\, . \nonumber
\ee
This yields $\h \triangleright \left ( b_{k \, j_{1} \dots j_{p}} \, p_{j_1} \dots p_{j_{p}} \right ) = 0$, for all $k \in K$. Introduce a basis $(z_l)_{l \in L}$ for the $\K$-module $S_{0,p}(\g\oplus\g)^\h$, so that we can write, for all $k \in K$,
\be
 b_{k \, j_{1} \dots j_{p}} \, p_{j_1} \dots p_{j_{p}} = \sum_{l \in L} a_{kl} z_l \, , \nonumber
\ee
with $a_{kl} \in \K$ for all $k \in K$ and $l \in L$. Now, $x$ can be rewritten as 
\be
x = \sum_{k \in K} \sum_{l \in L} a_{kl} \, y_k\, z_l \, , \nonumber
\ee
with $y_k \in S_{n-p,0}(\g\oplus\g)^\g$ for all $k \in K$ and $z_l \in S_{0,p}(\g\oplus\g)^\h$ for all $l \in L$. \finproof
\end{proof}

\noindent Let us now restrict our attention to the class of symmetric Lie algebras encompassed by the following
\begin{definition}
\label{RP}
We say that a symmetric semisimple Lie algebra $(\g,\theta)$ with associated symmetric decomposition $\g = \h \oplus \p$ is of \emph{restrictive type} (or has the \emph{restriction property}) if and only if for all $p \in {\mathbb N_0}$, the projection from $\g$ to $\p$ maps $S_{p}(\g \oplus \g)^\g$ onto $S_{0,p}(\g \oplus\g)^\h$.
\end{definition}
This restriction property will be sufficient to allow us to prove a refined version of Whitehead's lemma in the next section. Note that it is similar to the so-called \emph{surjection property} -- namely that the restriction from $\g$ to $\p$ maps $S(\g)^\g$ onto $S(\p)^\h$ -- which is known to hold for all classical symmetric Lie algebras \cite{Helgason} and which has proven useful in a number of contexts \cite{Burstalletal}. In our case we have, at least,

\begin{lemma}
\label{restriction}
If a symmetric semisimple Lie algebra splits (as in lemma \ref{gvv}), in such a way that its simple factors are drawn only from the following classical families of simple symmetric Lie algebras:
\be \mbox{AI}_{n>2} : (\mf{su}(n),\mf{so}(n))_{n>2}\, , \quad \mbox{AII}_{n} : (\mf{su}(2n),\mf{sp}(2n))_{n \in {\mathbb N^*}} \, , \quad  \mbox{BDI}_{n>2,1} : (\mf{so}(n+1), \mf{so}(n))_{n>2}\, ,\nn\ee
then it is of restrictive type.
\end{lemma}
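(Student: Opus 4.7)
The plan is to reduce the claim to the case of a single simple symmetric factor using Lemma~\ref{gvv}, and then to verify the restriction property for each of the three families by invoking the First Fundamental Theorem (FFT) of classical invariant theory. The key observation is that the restriction map is a degree-preserving algebra homomorphism, so producing lifts of a generating set of $S(\p\oplus\p)^\h$ is enough to establish surjectivity in every degree $p$.

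First I would show that the restriction property is additive under splitting. If $\g = \bigoplus_i \g_i$ splits as in Lemma~\ref{gvv}, then $\h = \bigoplus_i \h_i$, $\p = \bigoplus_i \p_i$, and
\be
S_p(\g\oplus\g) \;=\; \bigoplus_{(p_i):\sum_i p_i = p} \;\bigotimes_i S_{p_i}(\g_i\oplus\g_i)\,,\nn
\ee
with an analogous decomposition for $S_{0,p}(\g\oplus\g)$. The diagonal $\g$-action is block-diagonal in this decomposition, the formation of $\g$- and $\h$-invariants commutes with the tensor-product structure, and the projection $\g\to\p$ acts factor-wise. Hence the restriction property on the whole is equivalent to its holding on each simple factor, which by hypothesis lies in one of the listed families.

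For each such simple pair I would realize $\p$ as an explicit matrix $\h$-module, use polarized FFT to exhibit generators of $S(\p\oplus\p)^\h$, and lift each generator to an element of $S(\g\oplus\g)^\g$. For BDI, $(\mf{so}(n+1),\mf{so}(n))$, $\p\cong\K^n$ carries the standard $\mf{so}(n)$-module structure, and FFT for $O(n)$ presents $S(\p\oplus\p)^\h$ as a polynomial algebra in the three bilinears $X{\cdot}X$, $X{\cdot}Y$, $Y{\cdot}Y$; each of these lifts to a constant multiple of the trace form $\tr(\xi\eta)$ on $\mf{so}(n+1)\oplus\mf{so}(n+1)$, which is manifestly invariant under the diagonal adjoint action. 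For AI (resp.\ AII), $\p$ is realized as traceless symmetric (resp.\ quaternion-symmetric) matrices, and polarized FFT for $O(n)$ (resp.\ $Sp(2n)$) presents $S(\p\oplus\p)^\h$ as generated by trace polynomials $\tr(Z_{i_1}\cdots Z_{i_k})$ with $Z_j\in\{X,Y\}$, each of which lifts tautologically to the same expression on $\g\oplus\g$ and is invariant under the diagonal $\mf{su}$-action.

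The main obstacle I anticipate is the precise invocation of the polarized FFT in the AI and AII cases, where one must verify that polarized trace polynomials in symmetric (resp.\ quaternion-symmetric) matrices really do generate all of $S(\p\oplus\p)^\h$ in every degree, and not merely the smaller algebra $S(\p\oplus\p)^{\h\oplus\h}$ obtained by applying FFT to the two copies separately. The BDI case is easiest since $\p$ is the standard vector representation and FFT is elementary. Once the generator-level lifts are in hand, the algebra-homomorphism character of the restriction map together with the additive reduction of the first step delivers the full restriction property.
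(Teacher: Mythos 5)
Your route for the simple factors is essentially the paper's own: reduce via Lemma~\ref{gvv}, identify $\left(S_k(\p)\otimes S_\ell(\p)\right)^\h$ with $(k,\ell)$-homogeneous invariant polynomials via the Killing form and polarization, invoke Procesi's first fundamental theorem to present the $\h$-invariants as trace polynomials (AI, AII) or as polynomials in the scalar products $q(X,X)$, $q(X,Y)$, $q(Y,Y)$ (BDI), and observe that each generator lifts tautologically to a $\g$-invariant. Your worry about the ``polarized'' FFT is the right one to have, and it is resolved exactly as you hope: Procesi's theorem is stated for invariants of several matrix variables under simultaneous conjugation, which is what the diagonal $\h$-action on $\p\times\p$ requires.

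There is, however, one genuine omission. Lemma~\ref{gvv} splits a symmetric semisimple Lie algebra into a \emph{diagonal} symmetric Lie algebra $(\g_d,\theta_d)=(\mathfrak v\oplus\mathfrak v,\,\mathrm{swap})$ together with a collection of simple symmetric factors, and the hypothesis of Lemma~\ref{restriction} constrains only the simple factors; it does not force $\g_d$ to be trivial. A diagonal factor is semisimple but not simple, so your reduction step --- ``the restriction property on the whole is equivalent to its holding on each simple factor'' --- silently drops this case, and your proof would not cover, say, $\g=\mathfrak v\oplus\mathfrak v$ with the swap involution. The missing argument is short but has content: for the diagonal pair one has $\h\cong\mathfrak v$ acting on $\p\cong\mathfrak v$ adjointly, so $\K_{k,\ell}[\p,\p]^\h\cong\K_{k,\ell}[\mathfrak v,\mathfrak v]^{\mathfrak v}$; a $\g$-invariant polynomial on $\g\times\g$ must be of the form $f\bigl(p_1(x_1,y_1),p_2(x_2,y_2)\bigr)$ with $p_1,p_2$ $\mathfrak v$-invariant (since the two copies of $\mathfrak v$ act independently), and any target $\tilde p\in\K_{k,\ell}[\mathfrak v,\mathfrak v]^{\mathfrak v}$ is recovered as the restriction of the choice $p_1=\tilde p$, $p_2=0$, $f=\mathrm{id}$. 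With that case added, your tensor-product additivity argument and the three family-by-family verifications complete the proof along the same lines as the paper.
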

\begin{proof}
See appendix. \finproof
\end{proof}

\subsection{Contractible homomorphisms of $\K[[h]]$-modules}
Let $\K[[h]]$ denote the $\K$-algebra of formal power series in $h$ with coefficients in the field $\K$ and let $\U\g[[h]]$ be the $\U\g$-algebra of formal power series in $h$ with coefficients in $\U\g$. We have a natural $\K$-algebra monomorphism $i : \U\g \hookrightarrow \U\g[[h]]$. There is also an epimorphism of $\K$-algebras $j : \U\g[[h]] \twoheadrightarrow \U\g$ such that $j \circ i = \id$ on $\U\g$. We shall therefore identify $\U\g$ with its image $i(\U\g) \subset \U\g[[h]]$. We shall also consider complete $\K[[h]]$-modules and it is assumed that the tensor products considered from now on are completed in the $h$-adic topology. In this subsection, we further assume that $\g = \h \oplus \p$ is a symmetric decomposition.

\begin{definition}
Let $p \in {\mathbb Z}$, $m\in {\mathbb N_0}$ be integers. An element $x$ of $(\U \g)^{\otimes m} [[h]]$ is $(p,\p)$-contractible if and only if there exists a collection $(x_{n})_{n\in{\mathbb N_0}}$ of elements of $(\U\g)^{\otimes m}$ such that,
\be
x = \sum_{n\geq 0} h^n \, x_{n} 
\ee
and, for all $n \in {\mathbb N_0}$, there exists $l(n) \in {\mathbb N_0}$ such that $x_{n} \in F_{l(n),n+p}\left ( (\U \g)^{\otimes m} \right )$.
\end{definition}
Similarly, a subset $X \subset (\U \g)^{\otimes m} [[h]]$ is $(p,\p)$-contractible if all its elements are, according to the previous definition. Note that for the sake of simplicity, we shall refer to $(0,\p)$-contractible elements or sets as $\p$-contractible. Let us now define the notion of contractibility for $\K[[h]]$-module homomorphisms in $\mbox{Hom}\left (\U\g ^{\otimes m}[[h]], (\U \g)^{\otimes n} [[h]] \right )$.
\begin{definition}
Let $r, s\in {\mathbb N_0}$ and $p \in {\mathbb Z}$ be integers. A homomorphism of $\K[[h]]$-modules $\phi :   (\U\g) ^{\otimes r}[[h]] \rightarrow (\U \g)^{\otimes s} [[h]]$ is $\p$-contractible if and only if, for all $n,m \in {\mathbb N_0}$, $\phi (F_{n,m} (\U\g ^{\otimes r}))$ is $(m,\p)$-contractible as a subset.
\end{definition}
Let us emphasize that for every $\p$-contractible $\K[[h]]$-module homomorphism $\phi : (\U\g) ^{\otimes r}[[h]] \rightarrow (\U \g)^{\otimes s} [[h]]$, there exists a collection $(\varphi_{n})_{n\in{\mathbb N_0}}$ of $\K[[h]]$-module homomorphisms $\varphi_{n} : (\U\g) ^{\otimes r}[[h]] \rightarrow (\U\g)^{\otimes s}[[h]]$ such that
\be
\phi = \sum_{n\geq 0} h^n \, \varphi_{n} \ee
and, for all $n,m,p \in {\mathbb N_0}$, there exists $l(n) \in {\mathbb N_0}$ such that $\varphi_{n} \left ( F_{m,p}((\U\g) ^{\otimes r} ) \right ) \subseteq F_{l(n),n+p}\left ((\U \g)^{\otimes s} \right )$. The following two lemmas shall be useful in the next sections.
\begin{lemma}
\label{lemmeutile2}
Let $\phi$ and $\psi$ be two $\p$-contractible homomorphisms of $\K[[h]]$-modules. Then the $\K[[h]]$-module homomorphism $\phi\circ\psi$ is $\p$-contractible.
\end{lemma}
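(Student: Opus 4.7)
The plan is to unfold the definition of $\p$-contractibility on both sides, expand $\phi$ and $\psi$ as formal power series in $h$, and chase a bifiltered element through the Cauchy product of those expansions. Fix the sources and targets so that $\psi : (\U\g)^{\otimes r}[[h]] \to (\U\g)^{\otimes s}[[h]]$ and $\phi : (\U\g)^{\otimes s}[[h]] \to (\U\g)^{\otimes t}[[h]]$, with $\phi \circ \psi$ the composition to be analysed.

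First, I would invoke the remark following the preceding definition to write $\psi = \sum_{n \geq 0} h^n \psi_n$ and $\phi = \sum_{k \geq 0} h^k \varphi_k$, together with the associated filtration bounds. That is, for every $m, p \in \mathbb N_0$ and every $n$ there is an $l_\psi(n) \in \mathbb N_0$ with $\psi_n(F_{m,p}((\U\g)^{\otimes r})) \subseteq F_{l_\psi(n), n+p}((\U\g)^{\otimes s})$, and analogously for each $\varphi_k$ acting on the bifiltration of $(\U\g)^{\otimes s}$.

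Next, for a generic $x \in F_{m,p}((\U\g)^{\otimes r})$ I would compute
\beu
(\phi \circ \psi)(x) \;=\; \sum_{N \geq 0} h^N \sum_{k+n=N} \varphi_k(\psi_n(x)) \, ,
\eeu
whose inner sum is finite for each $N$, so the whole expression is well-defined in the $h$-adic topology. Chaining the two bounds yields $\psi_n(x) \in F_{l_\psi(n), n+p}$ and then $\varphi_k(\psi_n(x)) \in F_{l_\varphi(k), k+n+p} = F_{l_\varphi(k), N+p}$, where $l_\varphi(k)$ (which may in addition depend on the intermediate indices $l_\psi(n)$ and $n+p$) is finite by the $\p$-contractibility of $\phi$. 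Setting $L(N) := \max_{k+n=N} l_\varphi(k)$, a maximum over finitely many nonnegative integers, the $h^N$-coefficient of $(\phi \circ \psi)(x)$ lies in $F_{L(N), N+p}((\U\g)^{\otimes t})$, which is exactly the $(p,\p)$-contractibility of $(\phi \circ \psi)(F_{m,p}((\U\g)^{\otimes r}))$.

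The argument is pure bookkeeping, and I do not anticipate a genuine obstacle. The crucial structural fact that makes $\p$-contractibility closed under composition is that at order $h^n$ the second bifiltration index is shifted by exactly $n$, so the shifts behave additively under the Cauchy product, while the first index is harmlessly absorbed into a finite maximum. The only point worth double-checking is that the expansion $\phi\circ\psi = \sum_N h^N \sum_{k+n=N} \varphi_k\psi_n$ really does identify the order-$N$ coefficient, which is immediate from the definition of multiplication in $\K[[h]]$.
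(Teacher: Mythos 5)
Your proposal is correct and follows essentially the same route as the paper's proof: expand both maps as power series in $h$, form the Cauchy product, and chain the two filtration bounds so that the second index shifts additively while the first is absorbed into a finite bound. The only difference is that you make the finite maximum over $l_\varphi(k)$ explicit, which the paper suppresses with its $*$ convention.
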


\begin{proof}
We have
\be
\phi = \sum_{n \geq 0} h^n \, \varphi_n \qquad \mbox{and} \qquad \psi = \sum_{n\geq 0} h^n \, \psi_n \, , \nonumber
\ee
with, for all $n,m,p \in {\mathbb N_0}$, $\varphi_n(F_{m,p}) \subseteq F_{*,n+p}$, and $\psi_n(F_{m,p}) \subseteq F_{*,n+p}$. For the sake of simplicity we shall omit the arguments of the bifiltration and denote by $*$ the integer $l(n)$ whose existence is guaranteed by the definition of contractibility. We thus have
\be
\phi \circ \psi = \sum_{n \geq 0} \sum_{m \geq 0} h^{n+m} \, \varphi_n \circ \psi_m = \sum_{n \geq 0} h^{n} \, \sum_{m = 0}^n  \varphi_m \circ \psi_{n-m} \, ,\nonumber
\ee
with, for all $l, m,n,p  \in {\mathbb N_0}$, $\varphi_{m} \circ \psi_{n-m} (F_{l,p}) \subseteq \varphi_{m} (F_{*,n-m+p}) \subseteq F_{*,n+p}$. \finproof
\end{proof}

The following holds for the inverse.
\begin{lemma}
\label{lemmeutile3}
Let $\phi$ be a $\p$-contractible homomorphism of $\K[[h]]$-modules, congruent with $\id \mod h$. Then the $K[[h]]$-module homomorphism $\phi^{-1} = \id \mod h$ is $\p$-contractible.
\end{lemma}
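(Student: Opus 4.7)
The plan is to construct $\phi^{-1}$ explicitly as a formal power series in $h$ and then verify $\p$-contractibility term by term by induction on the $h$-degree. Since $\phi \equiv \id \mod h$, one may write $\phi = \id + h\tilde\phi$ with $\tilde\phi = \sum_{n\ge 0} h^n \phi_{n+1}$; the inverse $\phi^{-1}$ exists as a $\K[[h]]$-module homomorphism (its Neumann expansion converges in the $h$-adic topology) and automatically satisfies $\phi^{-1} \equiv \id \mod h$. One therefore sets $\phi^{-1} = \id + \sum_{n\ge 1} h^n \chi_n$ and determines the $\chi_n$ by imposing $\phi \circ \phi^{-1} = \id$.

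Expanding the composition and collecting powers of $h$ gives $\chi_1 = -\phi_1$ and, for $n \ge 2$,
\beu
\chi_n = -\phi_n - \sum_{k=1}^{n-1} \phi_k \circ \chi_{n-k}.
\eeu
The content of the lemma is then to establish, by induction on $n$, that for every $m,p \in \mathbb N_0$ there exists $l(n)\in\mathbb N_0$ with $\chi_n \bigl(F_{m,p}((\U\g)^{\otimes r})\bigr) \subseteq F_{l(n),\, n+p}((\U\g)^{\otimes s})$. The base case $n=1$ is immediate from $\p$-contractibility of $\phi$. In the inductive step, the hypothesis yields $\chi_{n-k}(F_{m,p}) \subseteq F_{*,\,(n-k)+p}$, and then $\p$-contractibility of $\phi$ gives $\phi_k \bigl(F_{*,\,(n-k)+p}\bigr) \subseteq F_{*,\,k+(n-k)+p} = F_{*,\,n+p}$, which is precisely what is needed; the explicit term $-\phi_n$ is controlled identically.

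I do not expect a genuine obstacle: $\p$-contractibility imposes no condition on the growth of the $\h$-filtration indices $l(n)$, so the entire argument rests on the additivity of the $\p$-degree under composition, which is exactly Lemma~\ref{lemmeutile2}. An equivalent, more conceptual presentation is to write the Neumann series $\phi^{-1} = \sum_{k\ge 0}(-h\tilde\phi)^k$, note that $h\tilde\phi = \phi - \id$ is itself $\p$-contractible, invoke Lemma~\ref{lemmeutile2} to conclude that each finite composition $(-h\tilde\phi)^k$ is $\p$-contractible, and then pass to the $h$-adic limit to assemble these into a single $\p$-contractible homomorphism. Both viewpoints reduce to the same bookkeeping, and either will fit into the subsequent rigidity arguments of section~\ref{SecIV}.
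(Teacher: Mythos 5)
Your proof is correct and follows essentially the same route as the paper's: both construct $\phi^{-1}$ order by order in $h$ from the relation $\phi\circ\phi^{-1}=\id$ and control the $\p$-filtration degree of each coefficient via the composition property of Lemma~\ref{lemmeutile2} (your explicit recursion for $\chi_n$ is just the unpacked form of the paper's step $\varphi_{n+1}=-q$). The Neumann-series reformulation is a cosmetic variant of the same bookkeeping.
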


\begin{proof}
We shall construct
\be
\phi^{-1} = \sum_{n \geq 0} h^n \, \varphi_n\, , \nonumber
\ee
by recursion on the order in $h$, by demanding that $\phi \circ \phi^{-1} = \id$. At leading order, we have $\varphi_0 = \id$ and therefore $\varphi_0 (F_{m,p}) \subseteq F_{m,p}$, for all $m,p \in {\mathbb N_0}$. Let us assume that we have a polynomial $\phi^{-1}_n$ of degree $n>0$ such that
\be
\phi \circ \phi^{-1}_n - \id = q \mod h^{n+1} \, . \nonumber
\ee
Assuming that $\phi_n^{-1}$ is $\p$-contractible, we have by lemma \ref{lemmeutile2} that $\phi \circ \phi^{-1}_n$ is $\p$-contractible, as $\phi$ is $\p$-contractible by assumption. Therefore, $q(F_{m,p}) \subseteq F_{*,n+1+p}$. Now, to complete the recursion, we have to find $\varphi_{n+1}$ such that
\be
\phi \circ \left (\phi^{-1}_n + h^{n+1} \, \varphi_{n+1} \right ) - \id = 0 \mod h^{n+2} \, . \nonumber
\ee
This is achieved by taking $\varphi_{n+1} = -q$. We thus have $\varphi_{n+1}(F_{m,p}) \subseteq F_{*,n+1+p}$. \finproof
\end{proof}

\noindent Finally, when $\phi$ is not only a $\K[[h]]$-module homomorphism but also a $\K[[h]]$-algebra homomorphism, we have the following useful lemma.
\begin{lemma}
\label{lemmeutile1}
Let $\phi : (\U \g)^{\otimes s}[[h]] \rightarrow (\U \g)^{\otimes t} [[h]]$ be a homomorphism of $\K[[h]]$-algebras. It is $\p$-contractible if and only if $\phi \left ( F_{1,0}((\U \g)^{\otimes s}) \right )$ is $(0,\p)$-contractible and $\phi \left ( F_{0,1}((\U \g)^{\otimes s}) \right )$ is $(1,\p)$-contractible.
\end{lemma}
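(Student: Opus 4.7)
The plan is to obtain the forward implication by direct specialisation, and to reduce the converse to an algebra-homomorphism argument in which control on the algebra generators $F_{1,0}$ and $F_{0,1}$ propagates to all of the bifiltration via a multiplicativity principle. The forward direction is immediate: by definition, $\p$-contractibility of $\phi$ requires $\phi(F_{n,m}((\U\g)^{\otimes s}))$ to be $(m,\p)$-contractible for every $(n,m)$, so the cases $(n,m)=(1,0)$ and $(n,m)=(0,1)$ deliver the two required conditions.

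For the converse the key preliminary step is a multiplicativity lemma for contractible elements: if $x=\sum_n h^n x_n$ with $x_n \in F_{l_1(n),\,n+p_1}$ and $y=\sum_n h^n y_n$ with $y_n \in F_{l_2(n),\,n+p_2}$, then the Cauchy product yields the coefficient of $h^n$ in $xy$ as a finite sum of terms $x_k y_{n-k} \in F_{l_1(k)+l_2(n-k),\,n+p_1+p_2}$, by submultiplicativity of the bifiltration recorded in section \ref{FUEA}. Setting $l(n) := \max_{0\le k\le n}(l_1(k)+l_2(n-k))$ shows $xy$ is $(p_1+p_2,\p)$-contractible, and by induction a product of any finite number of factors of respective types $(p_i,\p)$ is $(\sum_i p_i,\p)$-contractible.

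With multiplicativity in hand, I would write a general element of $F_{n,m}((\U\g)^{\otimes s})$ as a finite $\K$-linear combination of ordered words whose letters lie in $F_{1,0}\cup F_{0,1}$, with at most $n$ letters from $\h$ and at most $m$ from $\p$; this is precisely the defining description of the bifiltration. Since $\phi$ is a $\K[[h]]$-algebra homomorphism, its value on such a word factors as a product of its values on the individual letters, which by hypothesis are $(0,\p)$- and $(1,\p)$-contractible respectively. The multiplicativity lemma then makes the product $(m',\p)$-contractible, where $m'\le m$ is the number of $\p$-letters; monotonicity of the bifiltration upgrades this to $(m,\p)$-contractibility, and taking the maximum of the $l(n)$ values over the finitely many words in the decomposition gives a uniform bound.

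The main obstacle I anticipate is the bookkeeping needed to ensure the $l(n)$ functions can be chosen uniformly on $\phi(F_{n,m})$ as a whole rather than element by element. I would address this by passing to the $\K$-linear components $\varphi_n$ in the expansion $\phi=\sum_n h^n \varphi_n$: the hypotheses become explicit filtration bounds $\varphi_n(F_{1,0})\subset F_{l_\h(n),\,n}$ and $\varphi_n(F_{0,1})\subset F_{l_\p(n),\,n+1}$, and finite-dimensionality of each $F_{n,m}$ (which holds since $\g$ is finite-dimensional) together with $\K$-linearity of each $\varphi_n$ lets the multiplicativity estimates be transferred uniformly to all of $F_{n,m}$. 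No further structural input is required — in particular, unlike the rigidity results proved later in the paper, no cohomology vanishing is invoked; the lemma is entirely a statement about how algebra homomorphisms propagate generator-level bounds.
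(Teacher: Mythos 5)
Your proposal is correct and follows essentially the same route as the paper's proof: the forward direction by specialisation, and the converse by combining the generator description of the bifiltration with the submultiplicativity $F_{n,m}\cdot F_{k,l}\subset F_{n+k,m+l}$ and the Cauchy-product bookkeeping of the $h$-expansion of $\phi$ applied to products of letters (the paper organises this as a recursion on $(m,p)$ peeling off one factor of $F_{1,0}$ or $F_{0,1}$ at a time and summing over weak $3$-compositions of $n$, which is the same multiplicativity argument you state). Your explicit remark on the uniformity of $l(n)$ via finite-dimensionality of each $F_{n,m}$ is a point the paper leaves implicit in its ``$*$'' notation, but it is not a substantive divergence.
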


\begin{proof}
If $\phi$ is $\p$-contractible, it follows from the definition that, in particular, $\phi \left ( F_{1,0} \right )$ is $(0,\p)$-contractible and $\phi \left ( F_{0,1} \right )$ is $(1,\p)$-contractible. Now, assuming that $\phi \left ( F_{1,0}\right )$ is $(0,\p)$-contractible and $\phi \left ( F_{0,1}\right )$ is $(1,\p)$-contractible, we want to prove that, for all $m,p \in {\mathbb N_0}$, $\phi \left ( F_{m,p} \right )$ is $(p,\p)$-contractible. We proceed by recursion on $m$ and $p$. We have assumed the result for $m=1$ and $p=0$, as well as for $m=0$ and $p=1$. Suppose that, for some $m,p \in {\mathbb N_0}$, we have proven that, for all $m'<m$, $p'<p$ and $n\in{\mathbb N}_0$, there exists $l \in{\mathbb N_0}$ such that $\varphi_n \left ( F_{m',p'} \right ) \subseteq F_{l,n+p'}$. Then, for all $n \in {\mathbb N_0}$,
\bea
\varphi_n \left ( F_{m,p+1} ((\U \g)^{\otimes s}) \right ) &=& \varphi_n \left ( \bigoplus_{k=0}^m \bigoplus_{l=0}^p \, \mbox{span} \quad  F_{k,l} \cdot F_{0,1} \cdot F_{m-k,p-l} \right ) \nonumber \\
&=& \bigoplus_{k=0}^m \bigoplus_{l=0}^p \, \mbox{span} \quad \sum_{\sigma \in C_3(n)} \varphi_{\sigma_1} \left (F_{k,l} \right )  \cdot \varphi_{\sigma_2} \left ( F_{0,1} \right ) \cdot \varphi_{\sigma_3} \left (  F_{m-k,p-l} \right ) \nonumber \\
&\subseteq& \bigoplus_{k=0}^m \bigoplus_{l=0}^p \, \mbox{span}_{\sigma \in C_3(n)} \quad F_{*,\sigma_1+l} \cdot F_{*, \sigma_2+1} \cdot F_{*,\sigma_3+p-l} = F_{*,n+p+1} \nonumber \, ,
\eea
where, for all $X\subseteq (\U\g)^{\otimes s}$, span~$X$ denotes the $\K$-module linearly generated by $X$ and $C_3(n)$ is the set $\{\sigma=(\sigma_1,\sigma_2,\sigma_3)\in \mathbb N_0^3 : \sum_{i=1}^3 \sigma_i = n\}$  of weak 3-compositions of $n$.
Similarly, we have
\bea
\varphi_n \left ( F_{m+1,p} ((\U \g)^{\otimes s}) \right ) &=& \varphi_n \left ( \bigoplus_{k=0}^m \bigoplus_{l=0}^p \, \mbox{span} \quad  F_{k,l} \cdot F_{1,0} \cdot F_{m-k,p-l} \right ) \nonumber \\
&=&  \bigoplus_{k=0}^m \bigoplus_{l=0}^p \, \mbox{span} \quad \sum_{\sigma \in C_3(n)} \varphi_{\sigma_1} \left (F_{k,l} \right )  \cdot \varphi_{\sigma_2} \left ( F_{1,0} \right ) \cdot \varphi_{\sigma_3} \left (  F_{m-k,p-l} \right ) \nonumber \\
&\subseteq& \bigoplus_{k=0}^m \bigoplus_{l=0}^p \, \mbox{span}_{\sigma \in C_3(n)} \quad F_{*,\sigma_1+l} \cdot F_{*, \sigma_2} \cdot F_{*,\sigma_3+p-l} = F_{*,n+p} \nonumber \, ,
\eea
for all $n \in {\mathbb N_0}$. \finproof
\end{proof}

\subsection{Contractible deformation Hopf algebras}
We recall that $\U(\g)$ possesses a natural cocommutative Hopf algebra structure, whose coproduct is the algebra homomorphism $\Delta_0 : \U \g \rightarrow \U \g \otimes \U \g$ defined by 
$\Delta_0(x) = x \otimes 1 + 1 \otimes x$ for all $x \in \g$, and whose counit and antipode are specified by $\eps_0(1) = 1$ and $S_0(1) =1$. We refer to this as the \textit{undeformed} Hopf algebra structure.

Given the notion of contractibility introduced in the preceding subsections, it is natural to specialize the usual notion of a quantization -- {\it i.e.} a deformation -- of a universal enveloping algebra, as follows.

\begin{definition}\label{pdefHopf}
Let $(\g, \theta)$ be a symmetric Lie algebra, with symmetric decomposition $\g = \h \oplus \p$. A $\p$-contractible deformation $(\U_h \g, \cdot_h, \Delta_h, \epsilon_h, S_h)$ of the Hopf algebra $(\U\g, \cdot, \Delta_0, \epsilon_0, S_0)$ is a topological Hopf algebra such that
\begin{itemize}
\item{there exists a $\K[[h]]$-module isomorphism $\eta : \U_h\g \isom \U\g[[h]]$;}
\item{$\mu_h := \eta \circ \left ( \cdot_h \right ) \circ \left (\eta^{-1} \otimes \eta^{-1} \right ) = \cdot \mod h$ and $\mu_h$ is $\p$-contractible;}
\item{$\Deltat_h := \left ( \eta \otimes \eta \right ) \circ \Delta_h \circ \eta^{-1}= \Delta_0 \mod h$ and $\Deltat_h$ is $\p$-contractible;}
\item{$\tilde{S}_h := \eta \circ S_h \circ \eta^{-1} = S_0 \mod h$ and $\tilde{S}_h$ is $\p$-contractible;}
\item{$\tilde{\epsilon}_h = \epsilon_h \circ \eta^{-1} = \epsilon_0 \mod h$ and $\tilde{\epsilon}_h$ is $\p$-contractible.}
\end{itemize}
\end{definition}
This definition can be naturally restricted to bialgebras and algebras.

\section{On the cohomology of associative and Lie algebras}
\label{SecIII}

\subsection{The Hochschild cohomology}
Let $A$ be a $\K$-algebra. For any $(A,A)$-bimodule $(M,\triangleright, \triangleleft)$ and all $n \in {\mathbb N_0}^*$, we define the $(A,A)$-bimodule of $n$-cochains $C^n(A, M) = \mbox{Hom} (A^{\otimes n}, M)$. We also set $C^0(A, M) = M$. To each cochain module $C^n(A,M)$, we associate a coboundary operator, {\it i.e.} a derivation operator $\delta_n : C^n(A, M) \longrightarrow C^{n+1}(A, M)$, by setting, for all $f \in C^n(A, M)$,
\bea
\delta_n f \left (x_1, \dots , x_{n+1} \right ) &=& x_1 \triangleright f \left (x_2, \dots, \hat{x}_i, \dots, x_{n+1} \right ) + \sum_{i=1}^{n} \left ( -1 \right )^{i} \, f \left (x_1, \dots, x_i x_{i+1}, \dots, x_{n+1} \right ) \nonumber \\
&& + \left ( -1 \right )^{n+1} \, f \left ( x_1, \dots, x_n \right ) \triangleleft x_{n+1} 
\label{dnHoch}
\eea
for all $x_1, \dots, x_{n+1} \in A$. One can check that $\delta_n \circ \delta_{n+1} =0$ for all $n$. Therefore, the $(C^n, \delta_n)$ thus defined constitute a cochain complex. It is known as the Hochschild or {\it standard} complex \cite{Hoch} -- see also \cite{CEbook} or \cite{Weibel}. An element of the $(A,A)$-bimodule $Z^n(A, M) = \ker \delta_n \subset C^n(A, M)$ is called an $n$-{\it cocycle}, while an element of the $(A,A)$-bimodule $B^n(A, M) = \mbox{im} \, \delta_{n-1} \subset C^n(A, M)$ is called an $n$-{\it coboundary}. As usual, the quotient
\be
HH^n (A, M) = Z^n (A, M) / B^n (A, M)
\ee
defines the $n$-th cohomology module of $A$ with coefficients in $M$. In the next section, we shall be particularly interested in the Hochschild cohomology of the universal enveloping algebra of a given Lie algebra $\g$, {\it i.e.} $A = \U \g$, with coefficients in $M= \U \g$. The latter trivially constitutes a $(\U \g, \U\g)$-bimodule with the multiplication $\cdot$ of $\U\g$ as left and right $\U\g$-action. Concerning the Hochschild cohomology we will need the following result -- see for example theorem 6.1.8 in \cite{Chari}.
\begin{lemma}
\label{HH0}
Let $\g$ be a semisimple Lie algebra over $\mathbb K$. Then, $HH^2(\U\g,\U\g) =0$.
\end{lemma}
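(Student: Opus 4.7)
The plan is to reduce this Hochschild cohomology computation to a Chevalley--Eilenberg cohomology computation, where one can invoke Whitehead's second lemma. The passage from associative to Lie algebra cohomology is a classical device and, I expect, is what the authors have in mind here given the structure of Section \ref{SecIII}.

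First I would recall the standard comparison theorem: if $M$ is any $(\U\g,\U\g)$-bimodule, then $M$ carries a natural left $\g$-module structure $M^{\text{ad}}$ defined by $x\triangleright m=x\cdot m-m\cdot x$ for $x\in\g$, and there is a canonical isomorphism
\be
HH^{n}(\U\g,M)\;\cong\;H^{n}\!\left(\g,\,M^{\text{ad}}\right),\nonumber
\ee
where the right-hand side is Chevalley--Eilenberg cohomology. The proof of this isomorphism is obtained by comparing the bar resolution of $\U\g$ as a $\U\g$-bimodule with the Koszul-type resolution $\U\g\otimes\Lambda^{\bullet}\g\otimes\U\g$, the comparison being a quasi-isomorphism; upon applying $\mathrm{Hom}_{\U\g\otimes\U\g^{\text{op}}}(-,M)$ one recovers the Chevalley--Eilenberg complex of $M^{\text{ad}}$. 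Specializing to $M=\U\g$ with its regular bimodule structure, we are reduced to showing
\be
H^{2}(\g,\,\U\g^{\text{ad}})=0.\nonumber
\ee

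The second step is to exploit the semisimplicity of $\g$ in order to decompose $\U\g^{\text{ad}}$. Recall from Section \ref{FUEA} that $\U\g=\bigcup_{n\ge 0}F_{n}(\U\g)$ as left $\g$-modules, and each $F_{n}(\U\g)$ is finite-dimensional since $\g$ is. By Weyl's complete reducibility theorem, each $F_{n}(\U\g)$ splits as a direct sum of finite-dimensional irreducible $\g$-modules, and therefore so does $\U\g^{\text{ad}}$:
\be
\U\g^{\text{ad}}\;\cong\;\bigoplus_{\alpha\in A} V_{\alpha},\nonumber
\ee
with each $V_{\alpha}$ finite-dimensional and irreducible.

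Third, because $\g$ is finite-dimensional, the Chevalley--Eilenberg cochain spaces $\mathrm{Hom}_{\K}(\Lambda^{\bullet}\g,-)$ commute with arbitrary direct sums, and so
\be
H^{2}(\g,\,\U\g^{\text{ad}})\;\cong\;\bigoplus_{\alpha\in A}H^{2}(\g,V_{\alpha}).\nonumber
\ee
Whitehead's second lemma asserts that for any finite-dimensional module $V$ over a semisimple Lie algebra $\g$ of characteristic zero, $H^{2}(\g,V)=0$. Applying this to each summand yields the vanishing of $H^{2}(\g,\U\g^{\text{ad}})$, and hence of $HH^{2}(\U\g,\U\g)$.

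The only genuinely delicate point I see is handling the infinite-dimensionality of $\U\g$ as an ad-module, but this is resolved cleanly by the filtration--plus--Weyl argument together with the commutation of cohomology with direct sums in finite Lie-algebra degree. Everything else is bookkeeping around the bar/Koszul comparison and the application of the appropriate form of Whitehead's lemma.
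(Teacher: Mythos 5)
Your argument is correct and complete: the reduction $HH^{n}(\U\g,M)\cong H^{n}(\g,M^{\mathrm{ad}})$, the decomposition of $\U\g^{\mathrm{ad}}$ into finite-dimensional irreducibles via the filtration and Weyl's theorem, and the termwise application of Whitehead's second lemma together give the vanishing. The paper itself offers no proof but simply cites Theorem 6.1.8 of \cite{Chari}, and your argument is essentially the standard proof underlying that citation, so there is nothing to contrast.
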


\subsection{The Chevalley-Eilenberg cohomology}
Let $\g$ be a Lie algebra over $\mathbb K$ and $\left (M, \triangleright \right )$ a left $\g$-module. For all $n \in {\mathbb N_0}^*$, we define the left $\g$-module of $n$-cochains $C^n(\g, M) = \mbox{Hom} (\wedge^n \g, M)$, with left $\g$-action
\be
\left ( x \triangleright f \right ) \left ( x_1, \dots, x_n \right ) = x \triangleright \left ( f(x_1, \dots, x_n) \right ) - \sum_{i=1}^n f \left (x_1, \dots, [x, x_i], \dots, x_n \right ) \, ,
\ee
for all $f \in C^n(\g, M)$ and all $x, x_1, \dots, x_n \in \g$. We also set $C^0(\g, M) = M$ with its natural left $\g$-module structure. To each cochain module $C^n(\g,M)$, we associate a coboundary operator, {\it i.e.} a derivation operator $d_n : C^n(\g, M) \longrightarrow C^{n+1}(\g, M)$, by setting, for all $f \in C^n(\g, M)$,
\bea
d_n f \left (x_1, \dots , x_{n+1} \right ) &=& \sum_{i=1}^{n+1} \left ( -1 \right )^{i+1} \, x_i \triangleright f \left (x_1, \dots, \hat{x}_i, \dots, x_{n+1} \right ) \nonumber \\
\label{dn}
&& + \sum_{1 \leq i \leq j \leq n+1} \left ( -1 \right )^{i+j} \, f \left (\left [ x_i, x_j \right ], x_1, \dots , \hat{x}_i, \dots, \hat{x}_j, \dots, x_{n+1} \right ) \quad
\eea
for all $x_1, \dots, x_{n+1} \in \g$. In (\ref{dn}), hatted quantities are omitted and $\triangleright$ denotes the left $\g$-action on $M$. One can check that $d_n \circ d_{n+1} =0$ for all $n$. Therefore, the $(C^n, d_n)$ thus defined constitute a cochain complex. It is known as the Chevalley-Eilenberg complex \cite{CE}, -- see also \cite{CEbook} or \cite{Weibel}. An element of $Z^n(\g, M) = \ker d_n \subset C^n(\g, M)$ is called an $n$-{\it cocycle}, while an element of $B^n(\g, M) = \mbox{im} \, d_{n-1} \subset C^n(\g, M)$ is called an $n$-{\it coboundary}. As usual, the quotient
\be
H^n (\g, M) = Z^n (\g, M) / B^n (\g, M)
\ee
defines the $n$-th cohomology module of $\g$ with coefficients in $M$. One can check that, for all $n\in \mathbb N_0$, $Z^n(\g,M)$, $B^n(\g,M)$ and $H^n(\g,M)$ naturally inherit the left $\g$-module structure of $C^n(\g,M)$, as for all $n \in \mathbb N_0$,
\be
d \left (x \triangleright f \right ) = x \triangleright df \, ,
\ee
for all $f \in C^n(\g,M)$ and all $x \in \g$. An important result about the Chevalley-Eilenberg cohomology of Lie algebras concerns finite dimensional complex semisimple Lie algebras. It is known as Whitehead's lemma.
\begin{lemma}
\label{Whitehead}
Let $\g$ be a semisimple Lie algebra over $\mathbb K$. If M is any finite-dimensional left $\g$-module, then $H^1(\g,M) = H^2(\g,M) = 0$.
\end{lemma}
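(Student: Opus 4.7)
The plan is to use the Casimir element to reduce to the case of the trivial module, and then handle that case directly. Since $\g$ is semisimple, the Killing form on $\g$ is non-degenerate; fix dual bases $\{u_i\}$ and $\{u^i\}$ of $\g$ with respect to it, and form the Casimir $C_\g = \sum_i u_i u^i \in \U\g$, which lies in the centre of $\U\g$. Its action $C_M$ on $M$ therefore commutes with the $\g$-action, and the generalized eigenspace decomposition $M = M_0 \oplus M^{\times}$ -- where $M_0$ is the generalized zero-eigenspace and $C_M$ is invertible on $M^{\times}$ -- is a decomposition of $M$ as a $\g$-module. The cohomology splits accordingly, and I would treat the two summands separately.

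On $M^{\times}$ I would introduce the contraction operator $\iota : C^n(\g,M) \to C^{n-1}(\g,M)$ defined by
\[
(\iota f)(x_1,\ldots,x_{n-1}) = \sum_i u_i \triangleright f(u^i, x_1,\ldots,x_{n-1}),
\]
and carry out the key computation: using the Jacobi identity together with the $\ad$-invariance of the Killing form (equivalently $\sum_i [x,u_i]\otimes u^i + \sum_i u_i \otimes [x,u^i] = 0$ for all $x \in \g$), one obtains the homotopy identity $d_{n-1} \iota + \iota\, d_n = C_M$ on $C^n(\g,M)$. Since $C_M$ is invertible on $M^{\times}$, the operator $C_M^{-1} \iota$ is then a contracting homotopy, giving $H^n(\g,M^{\times}) = 0$ for all $n \geq 1$.

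For the generalized zero-eigenspace $M_0$ I would first argue that it is a trivial $\g$-module. The key input is that for any non-trivial finite-dimensional irreducible $\g$-module $V$, the Casimir acts as a non-zero scalar $c_V$: by Schur's lemma $C_\g$ is a scalar on $V$, and $\dim V \cdot c_V = \tr_V(C_\g) = \sum_i \tr_V(u_i u^i)$ is the trace of the trace form of $V$, which is non-degenerate on any simple ideal of $\g$ acting non-trivially on $V$ by Cartan's criterion. Combined with Weyl's theorem on complete reducibility -- obtained from the Casimir independently of the present lemma to avoid circularity -- this identifies $M_0$ with some $\K^r$ as a $\g$-module. The problem reduces to showing $H^1(\g,\K) = H^2(\g,\K) = 0$. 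For $n=1$, a cocycle $f \in \g^{\ast}$ satisfies $f([x,y])=0$ and hence vanishes on $[\g,\g]=\g$, so $H^1(\g,\K)=0$. For $n=2$, applying the Casimir homotopy on the Chevalley-Eilenberg complex itself (where the coadjoint action on $\wedge^{\bullet}\g^{\ast}$ endows $C_\g$ with non-trivial eigenvalues) shows every class has a $\g$-invariant representative; and $(\wedge^2 \g^{\ast})^\g = 0$ since $\g$-invariant bilinear forms on a semisimple $\g$ decompose along the simple summands into multiples of the Killing form, which is symmetric.

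The main technical obstacle is the verification of the homotopy identity $d_{n-1}\iota + \iota\, d_n = C_M$, a careful but essentially routine index manipulation. The more conceptual subtlety is avoiding circularity in the reduction to the trivial coefficients: complete reducibility must be proved independently of the cohomological vanishing it is used to establish. Since Whitehead's lemma is a classical result, a detailed proof along these lines may be found in \cite{Weibel} or \cite{CEbook}.
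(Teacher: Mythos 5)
The paper gives no proof of this lemma---it is the classical Whitehead lemma, cited to section 7.8 of \cite{Weibel}---and your proposal reproduces essentially that standard argument: the Casimir homotopy $d\iota+\iota d=C_M$ on the summand where $C_M$ is invertible, reduction of the generalized zero-eigenspace to trivial coefficients via Weyl's theorem, and the direct computations of $H^1(\g,\K)$ and $H^2(\g,\K)$. The outline is correct; the only points needing care in a full write-up are the base change to $\bar{\K}$ (so that Schur's lemma yields a scalar) and the precise verification that $\tr_V(C_\g)\neq 0$ on a non-trivial irreducible $V$ (your ``trace of the trace form'' phrasing conflates the Killing-dual and $B_V$-dual bases), both of which are handled in the cited references.
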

A proof of this result can be found, for instance, in section 7.8 of \cite{Weibel}.

\subsection{Contractible Chevalley-Eilenberg cohomology}
\label{ContractCEcohom}
In the next section, we will be mostly interested in the module $M= \U \g \otimes \U \g$, with the left $\g$-action induced by (\ref{diagaction}) and (\ref{UGm}), {\it i.e.} 
\be g \triangleright x = [\Delta_0 (g), x]\, ,\ee
for all $g \in \g$ and all $x \in \U\g\otimes \U\g$. In particular, we shall need a refinement of Whitehead's lemma, in the case of symmetric semisimple Lie algebras of restrictive type, taking into account the possible $\p$-contractibility of the generating cocycles of $Z^*(\g,\U\g \otimes \U\g)$. For all $m,n \in {\mathbb N_0}$, we therefore define $C^n_{m,\p}(\g, \U\g\otimes \U\g)$ as the set of $(m,\p)$-contractible $n$-cochains, by which we mean the set of $n$-cochains $f \in C^n(\g,\U\g \otimes \U\g)$, such that, for all $0 \leq p \leq n$, $f \left ((\wedge^{n-p} \h) \wedge (\wedge^p \p) \right ) \subseteq F_{l,m+p}(\U\g \otimes \U\g)$, for some $l \in {\mathbb N_0}$. Defining similarly, $Z^n_{m,\p}(\g, \U\g \otimes \U\g)= \ker d_n \cap C^n_{m,\p}(\g, \U\g\otimes \U\g)$ and $B^n_{m,\p}(\g,\U\g \otimes \U\g) = d_{n-1} C^{n-1}_{m,\p}(\g, \U\g\otimes \U\g)$ as the modules of the $(m,\p)$-contractible $n$-cocycles and of the $n$-coboundaries of $(m,\p)$-contractible $n-1$-cochains, respectively, we can define the $n$-th $(m,\p)$-contractible cohomology module as
\be
H^n_{m,\p}(\g, \U\g \otimes \U\g)= Z^n_{m,\p}(\g, \U\g\otimes \U\g) / B^n_{m,\p}(\g,\U\g \otimes \U\g) \, .
\ee
It is worth emphasizing that these cohomology modules generally differ from the usual ones $H^n(\g, \U\g \otimes \U\g)$. Consider for instance a case for which $H^1(\g, \U\g \otimes \U\g)=0$. We have that every $1$-cocycle in $Z^1(\g, \U\g\otimes \U\g)$, and therefore every cocycle $f \in Z^1_{m,\p}(\g, \U\g\otimes \U\g)$, is the coboundary of an element $x \in \U\g \otimes \U\g$. However, although the considered $f$ is $(m,\p)$-contractible, it may be that it can only be obtained as the coboundary of an element $x \in \U\g \otimes \U\g$ that does not belong to any $F_{*,m}(\U\g \otimes \U\g)$, thus yielding a non-trivial cohomology class in $H^1_{m,\p}(\g, \U\g \otimes \U\g)$. When $\g$ is a symmetric semisimple Lie algebra of restrictive type, we nonetheless establish the following lemma concerning the first $(m,\p)$-contractible cohomology module $H^1_{m,\p}(\g, \U\g \otimes \U\g)$.
\begin{lemma}
\label{WhiteheadRef}
Let $(\g, \theta)$ be a symmetric semisimple Lie algebra of restrictive type over $\mathbb K$ and let $\g =\h \oplus \p$ be the associated symmetric decomposition of $\g$. We have $H^1_{m,\p}(\g, \U\g \otimes \U\g)=0$, for all $m\in{\mathbb N_0}$.
\end{lemma}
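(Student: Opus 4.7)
The plan is to begin with the fact that $H^1(\g, \U\g \otimes \U\g) = 0$---which follows by applying Whitehead's Lemma \ref{Whitehead} summand-by-summand, using that the adjoint $\g$-module $\U\g\otimes\U\g$ is locally finite and completely reducible when $\g$ is semisimple---so that any $f \in Z^1_{m,\p}(\g,\U\g\otimes\U\g)$ can be written as $f = d_0 y$ for some $y \in \U\g\otimes\U\g$. The task is then to modify $y$ by adding a $\g$-invariant $Y$ (necessarily in $\ker d_0$, since $d_0 Y(x) = x \triangleright Y = 0$ for all $x \in \g$) so that $y + Y \in F_{*,m}(\U\g\otimes\U\g)$, the space of $(m,\p)$-contractible $0$-cochains. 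I would argue by induction on the minimal $N$ such that $y \in F_N(\U\g\otimes\U\g)$, the base case $N=0$ being trivial since $F_0 = \K$ is killed by $d_0$.

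The inductive step relies on the filtration facts $\h\triangleright F_{a,b} \subseteq F_{a,b}$ and, at the associated graded level, $\p\triangleright S_{a,b}(\g\oplus\g) \subseteq S_{a+1,b-1}(\g\oplus\g) + S_{a-1,b+1}(\g\oplus\g)$. Decompose the top symbol $\bar y \in S_N(\g\oplus\g) = \bigoplus_{a+b=N} S_{a,b}(\g\oplus\g)$ as $\bar y = \sum_b \bar y_{N-b,b}$ and set $b_0 := \max\{b : \bar y_{N-b,b} \neq 0\}$. If $b_0 > m$, the contractibility $h \triangleright y \in F_{*,m}$ forces $\bar y_{N-b_0,b_0}$ to be $\h$-invariant, while $p \triangleright y \in F_{*,m+1}$ combined with the maximality of $b_0$ (which rules out any competing contribution to $S_{N-b_0-1, b_0+1}$ from higher-$b$ symbols) forces $\p \triangleright \bar y_{N-b_0, b_0} \in S_{N-b_0+1, b_0-1}(\g\oplus\g)$. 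Lemma \ref{lemmafact} then writes $\bar y_{N-b_0,b_0} = \sum_k \am_k \bm_k$ with $\am_k \in S_{N-b_0, 0}(\g\oplus\g)^\g$ and $\bm_k \in S_{0, b_0}(\g\oplus\g)^\h$. The restriction property (Definition \ref{RP}) upgrades each $\bm_k$ to a $\g$-invariant $\bm_k' \in S_{b_0}(\g\oplus\g)^\g$ with $\pi_\p(\bm_k') = \bm_k$, so that $Y := \mbox{sym}\left(\sum_k \am_k \bm_k'\right) \in (\U\g\otimes\U\g)^\g$ satisfies $d_0 Y = 0$. Writing $\bm_k' = \sum_{j=0}^{b_0} \bm_k'^{(j)}$ with $\bm_k'^{(j)} \in S_{j, b_0-j}(\g\oplus\g)$, the $S_{N-b_0, b_0}$-component of the top symbol of $Y$ equals $\sum_k \am_k \bm_k = \bar y_{N-b_0, b_0}$, while the remaining components of $Y$'s top symbol lie in $S_{N-b_0+j, b_0-j}$ with $j \geq 1$; hence $y - Y$ retains filtration degree $N$ but has strictly smaller $b_0$ on its top symbol.

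Iterating this surgery inside $F_N$ a finite number of times brings the top symbol of $y$ into $\bigoplus_{b \leq m} S_{N-b,b}(\g\oplus\g)$, after which I would split $y = y' + y''$ with $y' := \mbox{sym}(\bar y) \in F_N \cap F_{*,m}$ and $y'' \in F_{N-1}$. Since $y' \in F_{*,m}$, the coboundary $d_0 y'$ is itself $(m,\p)$-contractible, so $f - d_0 y' = d_0 y''$ belongs to $Z^1_{m,\p}$ and is now a coboundary of an element of strictly smaller filtration degree; the inductive hypothesis yields $z \in F_{*,m}$ with $d_0 z = f - d_0 y'$, whence $y' + z \in F_{*,m}$ and $d_0(y'+z) = f$, as required. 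The main obstacle is organising the nested induction cleanly---descending on $N$, while within each $N$ also descending on $b_0$---and checking that each subtraction by $Y$ preserves both the cocycle $f$ and the filtration degree $N$; the restriction property enters decisively at the step of upgrading an $\h$-invariant tensor supported purely on $\p$ to a $\g$-invariant tensor on all of $\g\oplus\g$ whose $\p$-projection recovers the original, which is exactly what creates a $\g$-invariant correction whose subtraction kills the offending top-$\p$-degree piece.
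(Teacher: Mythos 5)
Your proposal is correct and follows essentially the same route as the paper's proof: invoke Whitehead's lemma to obtain a primitive $y$, then use the $(m,\p)$-contractibility of $f(\h)$ and $f(\p)$ to show that the extremal bigraded component of (the symbol of) $y$ is $\h$-invariant with $\p$-action lowering the $\p$-degree, apply lemma \ref{lemmafact} to factor it, and lift via the restriction property to a $\g$-invariant whose subtraction removes that component without changing $d_0 y$. The only difference is organisational --- you run an outer induction on the filtration degree and operate on the top symbol, whereas the paper decomposes $y$ into all graded components $S_n(\g\oplus\g)$ at once and runs the descending $\p$-degree recursion within each $n>m$ --- and this does not change the substance of the argument.
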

\begin{proof}
Let $m\in {\mathbb N_0}$ be a positive integer. We have to prove that every $(m,\p)$-contractible $1$-cocycle $f\in Z^1_{m,\p}(\g, \U\g\otimes \U\g)$ is the coboundary of an element $\alpha \in F_{l,m}(\U\g \otimes \U\g)$, for some $l \in {\mathbb N_0}$. From lemma \ref{Whitehead}, there exists an $x \in \U\g \otimes \U\g$ such that $f=d_0 x$. All we have to prove is that we can always find a left $\g$-invariant $y \in \left (\U\g \otimes \U\g \right )^\g$, such that $x=y$ modulo $F_{l,m}(\U\g\otimes \U\g)$ for some $l \in {\mathbb N_0}$. Then, we can check that for $\alpha = x - y \in F_{l,m}(\U\g \otimes \U\g)$, we have
\be
d_0 \alpha = d_0 \left (x- y \right ) = d_0 x = f\, . \nonumber
\ee
In view of (\ref{snp}), we can first expand $x$ into its components in the left $\g$-modules isomorphic to the $S_n(\g \oplus \g)$, for all $n \in {\mathbb N_0}$. Up to the isomorphism of left $\g$-modules, which we shall omit here, we have $x = \sum_{n\geq0} x_n$ where, for all $n \in {\mathbb N_0}$, $x_n \in S_n(\g\oplus\g)$. Similarly, we can further decompose each $S_n(\g \oplus \g)$ into the left $\h$-modules $S_{n-p,p}(\g\oplus \g)$, with $0\leq p\leq n$, and, accordingly, each $x_n$. We are now going to construct the desired $y \in \left (\U\g \otimes \U\g \right )^\g$ by recursion, submodule by submodule. 
If $x_n = 0$ for all $n > m$, we can set $y=0$ and we are done. So, suppose that there exists an $n>m$ such that $x_n \neq 0$ and let $x_{0,n}$ be the component of $x_n$ in $S_{0,n}(\g\oplus \g)$. If $x_{0,n}$ vanishes, we can skip to the component of $x_n$ in $S_{1,n-1}(\g\oplus\g)$. Otherwise, we are going to prove that there exists a $\g$-invariant $y_{n,0} \in S_{n}(\g\oplus\g)^\g$, such that the component of $x_n-y_{n,0}$ in $S_{0,n}(\g\oplus\g)$ vanishes.
From $f$ being $(m,\p)$-contractible, we know that
\be
\label{hinv}
f(\h) = d_0 x (\h) = \h \triangleright \left ( x_n + \sum_{n' \neq n} x_{n'} \right )\subseteq F_{l,m}(\U\g \otimes \U\g) \, ,
\ee
for some $l \in {\mathbb N_0}$. Therefore, since the $S_{m,p}(\g\oplus\g)$ are left $\h$-modules, we have $\h \triangleright x_{0,n} =0$. Since $\g$ has the restriction property, definition \ref{RP}, it follows that the $\h$-invariant tensor $x_{0,n} \in S_{0,n}(\g \oplus\g)^\h$ is the restriction to $\p$ of a $\g$-invariant tensor $y_{n,0} \in S_{n}(\g\oplus\g)^\g$. Now consider $x_n-y_{n,0}$. By construction, it has no component in $S_{0,n}(\g\oplus\g)$. If $n-1\leq m$, we set $y_n = y_{n,0}$ and skip to another $\g$-module $S_{n'>m}(\g\oplus\g)$ where $x$ has a non-vanishing component, if any. Otherwise, let $0\leq k < n-m$ and assume that we have found $y_{n,k} \in S_{n}(\g\oplus\g)^\g$, such that $x_n-y_{n,k}$ has vanishing component in all the $S_{n-p,p}(\g\oplus\g)$ with $p \geq n-k > m$. We are going to prove that there exists $y_{n,k+1} \in S_{n}(\g\oplus\g)^\g$ such that $x_n-y_{n,k+1}$ has vanishing component in all the $S_{n-p,p}(\g\oplus\g)$ with $p \geq n-k-1$. To do so, let $x_{k+1,n-k-1}$ be the component of $x_n-y_{n,k}$ in $S_{k+1,n-k-1}(\g\oplus\g)$. If it is zero, we set $y_{n,k+1}=y_{n,k}$. Otherwise, note that from (\ref{hinv}), we have $\h \triangleright x_{k+1,n-k-1} =0$.
But the $(m,\p)$-contractibility of $f$ also implies that
\be
f(\p) = d_0 x (\p) = \p \triangleright \left ( x_n-y_{n,k} + \sum_{n' \neq n} x_{n'} \right )\subseteq F_{l,m+1}(\U\g \otimes \U\g) \, , \nonumber
\ee
from which it follows that $\p \triangleright x_{k+1,n-k-1} \in S_{k+2,n-k-2}(\g\oplus\g)$. According to lemma \ref{lemmafact}, we can write $x_{k+1,n-k-1}= \sum_{i, j} a_{ij} \, w_i \, z_j$, with $a_{ij} \in \K$, $w_i \in S_{k+1,0}(\g\oplus\g)^\g$ and $z_j \in S_{0,n-k-1}(\g\oplus\g)^\h$. Since $\g$ has the restriction property, all the $z_j$ are the restrictions to $\p$ of $\g$-invariant elements $\zeta_j \in S_{n-k-1}(\g\oplus\g)^\g$. Now, set $y_{n,k+1}=y_{n,k} + \sum_{i, j} a_{ij} \, w_i \, \zeta_j$. It is obvious that $y_{n,k+1} \in S_{n}(\g\oplus\g)^\g$ and, by construction, $x_{n}-y_{n,k+1}$ has no component in all the $S_{n-p,p}(\g\oplus\g)$, with $p \geq n-k-1$.
The recursion goes on until we have $y_{n,n-m} \in S_{n}(\g\oplus\g)^\g$ such that $x_n-y_{n,n-m}$ has vanishing components in all the $S_{n-p,p}(\g\oplus\g)$, with $p>m$. We therefore set $y_n = y_{n,n-m}$. By repeating this a finite number of times~\footnote{It is rather obvious that $x$ has non-vanishing components in a finite number of submodules $S_n(\g \oplus \g)$, as there always exists an $l \in {\mathbb N}$ such that $x \in F_{l}(\U\g\otimes\U\g)$.}, in all the $S_{n'>m}(\g\oplus\g)$ in which $x$ has non-vanishing components, we obtain the desired $y = \sum_{n \geq 0} y_n$. \finproof
\end{proof}

\section{Rigidity theorems}
\label{SecIV}

\subsection{Contractible algebra isomorphisms}
\begin{proposition}
\label{Propisom}
Let $\g$ be a semisimple Lie algebra over $\K$ and let $\h$ be a symmetrizing Lie subalgebra with orthogonal complement $\p$ in $\g$. Then, for every $\p$-contractible deformation algebra $(\U_h \g, \cdot_h)$ of $(\U \g, \cdot)$, there exists a $\p$-contractible isomorphism of $\K[[h]]$-algebras $(\U_h \g, \cdot_h) \isom (\U \g[[h]], \cdot)$, that is congruent with $\id \mod h$.
\end{proposition}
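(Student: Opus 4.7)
The plan is to reduce everything to $\U\g[[h]]$. Via the given $\K[[h]]$-module isomorphism $\eta : \U_h\g \isom \U\g[[h]]$, the product $\cdot_h$ transports to a $\p$-contractible bilinear map $\mu_h = \cdot + \sum_{n\geq 1} h^n \mu_n$ on $\U\g[[h]]$ with $\mu_h = \cdot \mod h$. I will construct a $\K[[h]]$-linear endomorphism $T : \U\g[[h]] \to \U\g[[h]]$, congruent to $\id$ mod $h$, $\p$-contractible, and satisfying the intertwining relation $T(\mu_h(a,b)) = T(a)\cdot T(b)$ for all $a,b \in \U\g[[h]]$. Then $\Psi := T \circ \eta$ is a $\K[[h]]$-algebra isomorphism $(\U_h\g, \cdot_h) \isom (\U\g[[h]], \cdot)$, $\p$-contractible via the identification by $\eta$, with inverse made sense of by Lemma~\ref{lemmeutile3} applied to $T$.

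I would build $T = \id + \sum_{n\geq 1} h^n T_n$ by induction on $n$, with $T_0 = \id$ as trivial base case. At order $h^n$ the intertwining relation takes the form
\beu
(\delta_H T_n)(a,b) = \mu_n(a,b) + \Omega_n(a,b) \, ,
\eeu
where $\delta_H$ is the Hochschild coboundary of Section~\ref{SecIII} for $\U\g$ acting on itself by multiplication, $(\delta_H T_n)(a,b) = a\cdot T_n(b) - T_n(a\cdot b) + T_n(a)\cdot b$, and $\Omega_n \in C^2(\U\g,\U\g)$ is a universal quadratic expression in the $(\mu_k, T_k)_{k<n}$, already determined by the inductive hypothesis. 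Extracting the order-$h^n$ piece of the associativity constraint $\mu_h(\mu_h(a,b),c) = \mu_h(a, \mu_h(b,c))$ together with the inductive closedness of lower-order obstructions yields $\delta_H(\mu_n + \Omega_n) = 0$, i.e.\ the right-hand side is a Hochschild 2-cocycle. Lemma~\ref{HH0} then furnishes a primitive $T_n$, completing the inductive step and producing an a~priori abstract $\K[[h]]$-algebra isomorphism.

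The main obstacle is the refinement to a $\p$-contractible $T$, that is, to the choice of $T_n$ with $T_n(F_{m,p}(\U\g)) \subseteq F_{\ast,n+p}(\U\g)$ for all $m,p \in \mathbb N_0$. By Lemma~\ref{lemmeutile2} applied inductively to the composition structure of $\Omega_n$, and using the $\p$-contractibility of $\mu_h$, one checks that the obstruction cocycle $\mu_n + \Omega_n$ sends $F_{m+m', p+p'}((\U\g)^{\otimes 2})$ into $F_{\ast, n+p+p'}(\U\g)$, exactly matching the bifiltered image a Hochschild coboundary of a 1-cochain in the desired class would produce. What is therefore required is a filtration-controlled version of the vanishing statement Lemma~\ref{HH0}: every Hochschild 2-cocycle with the above bifiltered behaviour must admit a primitive 1-cochain with the matching behaviour. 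The natural path is to invoke the isomorphism $HH^\ast(\U\g,\U\g) \cong H^\ast(\g,\U\g)$ and decompose $\U\g$ under the adjoint $\g$-action into finite-dimensional irreducibles using semisimplicity, then apply Whitehead's Lemma~\ref{Whitehead} piece by piece while tracking the bifiltration through the symmetrization isomorphism $S(\g) \cong \U\g$ of Section~\ref{SecII}.

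The delicate bookkeeping — ensuring the coboundary witness lies in the correct $(m,\p)$-bifiltered piece rather than just somewhere in $\U\g$ — is the crux. Note that, unlike Lemma~\ref{WhiteheadRef} which is phrased for $\U\g \otimes \U\g$ as the coefficient module, here the relevant module is $\U\g$ itself, so the restriction property is not needed; what one exploits instead is the compatibility of the adjoint-action decomposition of $\U\g$ with the bifiltration induced by the symmetric decomposition $\g = \h \oplus \p$. Once a $\p$-contractible $T_n$ is produced at each order, the composition $T \circ \eta$ gives the required isomorphism, and Lemma~\ref{lemmeutile3} confirms that its inverse is $\p$-contractible as well.
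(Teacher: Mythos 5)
Your overall recursion is the same as the paper's: transport $\cdot_h$ to $\mu_h$ via $\eta$, seek a multiplicative $T$ with $T(\mu_h(a,b))=T(a)\cdot T(b)$ order by order in $h$, identify the order-$h^n$ obstruction as a Hochschild $2$-cocycle via associativity, and kill it using $HH^2(\U\g,\U\g)=0$ (lemma \ref{HH0}). The divergence, and the genuine gap, is in how the primitive is made $\p$-contractible. You demand the full bifiltered control $T_n(F_{m,p}(\U\g))\subseteq F_{*,n+p}(\U\g)$ for all $m,p$, and to obtain it you posit a ``filtration-controlled version of lemma \ref{HH0}'': every Hochschild $2$-cocycle with the stated bifiltered behaviour admits a primitive $1$-cochain with matching behaviour. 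This is precisely the kind of statement the paper warns about in subsection \ref{ContractCEcohom}: a contractible cocycle may a priori only arise as the coboundary of a non-contractible cochain, so the contractible cohomology can be nonzero even when the ordinary cohomology vanishes. Your proposed route (decompose $\U\g$ into finite-dimensional adjoint irreducibles, apply Whitehead piecewise, ``track the bifiltration through the symmetrization isomorphism'') names the difficulty rather than resolving it: the tracking is the whole problem, and nothing in the sketch explains why the primitive produced summand by summand lands in the right $F_{*,n+p}$. Note that for the analogous vanishing with coefficients in $\U\g\otimes\U\g$ (lemma \ref{WhiteheadRef}) the paper needs the restriction property exactly to control this; your assertion that no extra hypothesis is needed for $\U\g$ coefficients is left unproven.

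The paper avoids your strong lemma entirely by exploiting multiplicativity: lemma \ref{lemmeutile1} says that a $\K[[h]]$-algebra homomorphism is $\p$-contractible as soon as its values on $F_{1,0}=\K\oplus\h$ and $F_{0,1}=\K\oplus\p$ are controlled. Hence at each order one only needs the Hochschild primitive $\beta$ of the obstruction $r$ to satisfy $\beta(F_{1,0})\subseteq F_{*,n+1}$ and $\beta(F_{0,1})\subseteq F_{*,n+2}$ --- a condition on the degree-one generators only, extracted from $r(F_{2,0})\subseteq F_{*,n+1}$ and $r(F_{1,1})\subseteq F_{*,n+2}$ --- rather than bifiltered control of $\beta$ on all of $\U\g$. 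To repair your argument you should either supply a genuine proof of the filtration-controlled Hochschild vanishing you invoke (a stronger result than the paper uses), or restructure the induction so that, as in the paper, contractibility is only ever checked on $F_{1,0}$ and $F_{0,1}$ and then propagated by lemma \ref{lemmeutile1}.
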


\begin{proof}
By definition, there exists a $\K[[h]]$-module isomorphism $\eta : \U_h \g \isom \U \g [[h]]$. The latter defines a $\K[[h]]$-algebra isomorphism between $(\U_h\g, \cdot_h)$ and $(\U\g[[h]], \mu_h)$, where $\mu_h := \eta \circ \left ( \cdot_h \right ) \circ \left (\eta^{-1} \otimes \eta^{-1} \right ) = \cdot \mod h$. If we found a $\p$-contractible $\K[[h]]$-algebra automorphism
\be
\label{phiautom}
\phi : (\U\g[[h]], \mu_h) \isom (\U\g[[h]], \cdot) \, ,
\ee
we would prove the proposition as $\phi \circ \eta$ would constitute the desired $\K[[h]]$-algebra isomorphism from $(\U_h\g, \cdot_h)$ to $(\U\g[[h]], \cdot)$. Let $\phi$ be a $\K[[h]]$-module automorphism on $\U\g[[h]]$. The condition for such an automorphism to be the $\K[[h]]$-algebra automorphism (\ref{phiautom}) is
\be
\label{muh}
\mu_h = \phi^{-1} \circ ( \cdot )  \circ \left ( \phi \otimes \phi \right ) \, .
\ee
Let us construct
\be
\phi = \sum_{n\geq 0} h^n \, \varphi_{n} \, ,
\ee
order by order in $h$. At leading order, we have $\mu_0 = \cdot$ and we can take $\varphi_0= \id \in \mbox{Hom}(\U \g[[h]],\U\g[[h]])$. We thus have $\varphi_0 (F_{m,p}(\U\g)) \subseteq F_{m,p}(\U\g)$, for all $m,p \in {\mathbb N_0}$. Suppose now that we have found a polynomial of degree $n>0$,
\be
\phi_n = \sum_{m=0}^{n} h^m \, \varphi_{m} \, ,
\ee
such that
\be
\label{muhn}
\mu_h - \phi_n^{-1} \circ ( \cdot )  \circ \left ( \phi_n \otimes \phi_n \right ) = h^{n+1} r \mod h^{n+2} \, ,
\ee
where $\phi_n^{-1}$ denotes the exact inverse series of $\phi_n$ defined by $\phi_n \circ \phi_n^{-1} = \id$ and $r \in \mbox{Hom}(\U\g \otimes \U\g[[h]], \U\g[[h]])$. We assume that $\phi_n$ is $\p$-contractible. Therefore, $\left ( \cdot \right ) \circ \left ( \phi_n \otimes \phi_n \right )$ is $\p$-contractible. By lemma \ref{lemmeutile3}, $\phi_n^{-1}$ is $\p$-contractible and, by lemma \ref{lemmeutile2}, $\phi_n^{-1} \circ \left ( \cdot \right ) \circ \left (\phi_n \otimes \phi_n \right )$ is $\p$-contractible. By definition of a $\p$-contractible deformation algebra, we know that $\mu_h$ is $\p$-contractible.
It therefore follows from (\ref{muhn}) at order $h^{n+1}$ that $r(F_{m,p}(\U\g \otimes \U\g)) \subseteq F_{*,n+1+p}(\U \g)$, for all $m,p \in {\mathbb N_0}$. From the associativity of $\mu_h$, we deduce that $r$ is a 2-cocycle in the Hochschild complex,
\be
\delta_2 r = 0 \, .
\ee
As $\g$ is semisimple, it follows from lemma \ref{HH0} that its second Hochschild cohomology module $HH^2(\U \g, \U \g)$ is empty, so that $r$ is a coboundary. We thus have $r=\delta_1 \beta$, for some $\beta \in \mbox{Hom}(\U\g[[h]],\U\g[[h]])$. But we know that, in particular, $r(F_{2,0}(\U\g \otimes \U\g)) \subseteq F_{*,n+1}(\U \g)$ and $r(F_{1,1}(\U\g \otimes \U\g)) \subseteq F_{*,n+2}(\U \g)$. It follows that $\beta$ can be consistently chosen so that $\beta(F_{1,0}(\U\g)) \subseteq F_{*,n+1}(\U\g)$ and $\beta(F_{0,1}(\U\g)) \subseteq F_{*,n+2}(\U\g)$. To complete the recursion, we have to solve
\be
\mu_h = \left (\phi_n^{-1} - h^{n+1} \varphi_{n+1} \mod h^{n+2} \right ) \circ \left [ \left (\phi_n +h^{n+1} \varphi_{n+1} \right ) \cdot \left (\phi_n +h^{n+1} \varphi_{n+1} \right ) \right ] \mod h^{n+2} \nonumber
\ee
that is
\be
\delta_1 \varphi_{n+1} = r \, .
\ee
This equation can be solved by taking $\varphi_{n+1}= -\beta$, which implies that $\varphi_{n+1}(F_{1,0}(\U\g)) \subseteq F_{*,n+1}(\U \g)$ and $\varphi_{n+1}(F_{0,1}(\U\g)) \subseteq F_{*,n+2}(\U \g)$. The proposition then follows from lemma \ref{lemmeutile1}. \finproof
\end{proof}

\subsection{Contractible twisting for symmetric semisimple Lie algebras}
\begin{proposition}
\label{Twistprop}
Let $(\g, \theta)$ be a symmetric semisimple Lie algebra over $\K$ having the restriction property, and let $\g= \h \oplus \p$ be the associated symmetric decomposition of $\g$. Every $\p$-contractible deformation $(\U_h \g, \Delta, \epsilon, S)$ of the Hopf algebra $(\U \g, \Delta_0, \epsilon_0, S_0)$ is isomorphic, as a Hopf algebra over $\K[[h]]$, to a twist of $(\U \g, \Delta_0, \epsilon_0, S_0)$ by a $\p$-contractible invertible element $\F \in \U \g \otimes \U \g [[h]]$, congruent with $1\otimes 1 \mod h$.
\end{proposition}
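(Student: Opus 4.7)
The plan is to mimic the Drinfel'd-type rigidity argument used in the proof of Proposition \ref{Propisom}, but for the coproduct in place of the multiplication, with every step refined to keep track of $\p$-contractibility; the crucial cohomological input will be Lemma \ref{WhiteheadRef} in place of the ordinary Whitehead lemma. First, Proposition \ref{Propisom} provides a $\p$-contractible $\K[[h]]$-algebra isomorphism $\psi:(\U_h\g,\cdot_h)\isom(\U\g[[h]],\cdot)$ congruent with $\id \mod h$, and pushing the coalgebra data through $\psi$ yields a $\p$-contractible coassociative coproduct $\Deltat:=(\psi\otimes\psi)\circ\Delta\circ\psi^{-1}$ on $\U\g[[h]]$ equipped with its undeformed multiplication, together with a $\p$-contractible counit and antipode, all congruent to their undeformed counterparts modulo $h$. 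It therefore suffices to exhibit a $\p$-contractible invertible $\F\in\U\g\otimes\U\g[[h]]$, congruent with $1\otimes 1$ modulo $h$, satisfying $\Deltat(x)=\F\,\Delta_0(x)\,\F^{-1}$ for all $x$, and then to normalize $\F$ into a bona fide Hopf twist.

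I would build $\F=1\otimes 1+\sum_{n\geq 1}h^n\F_n$ by induction on $n$, arranging at each step that $\F_n\in F_{*,n}(\U\g\otimes\U\g)$. Assume a partial twist $\F^{(n)}:=1\otimes 1+\sum_{k=1}^n h^k\F_k$ has been produced, $\p$-contractibly, satisfying $\Deltat(x)\equiv A(x)\mod h^{n+1}$ with $A(x):=\F^{(n)}\Delta_0(x)(\F^{(n)})^{-1}$, and let $\omega$ be the order-$h^{n+1}$ residue, $\Deltat(x)-A(x)\equiv h^{n+1}\omega(x)\mod h^{n+2}$. Since $\Deltat$ and $A$ are algebra homomorphisms that agree to order $h^n$, a direct computation shows that $\omega$ is a $\Delta_0$-derivation $\U\g\to\U\g\otimes\U\g$, hence determined by $\omega|_\g$; computing the order-$h^{n+1}$ piece of $\Deltat([x,y])-A([x,y])=[\Deltat(x),\Deltat(y)]-[A(x),A(y)]$ for $x,y\in\g$ then shows $\omega|_\g$ is a $1$-cocycle in the Chevalley-Eilenberg complex of $\g$ with values in the $\g$-module $(\U\g\otimes\U\g,\ g\triangleright m=[\Delta_0(g),m])$ from Section \ref{ContractCEcohom}. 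Moreover, the $\p$-contractibility of $\Deltat$, combined via Lemmas \ref{lemmeutile2}--\ref{lemmeutile3} with that of $\F^{(n)}$ and its inverse, forces $\omega\in Z^1_{n+1,\p}(\g,\U\g\otimes\U\g)$. Lemma \ref{WhiteheadRef} then supplies $u\in F_{l,n+1}(\U\g\otimes\U\g)$ with $\omega(x)=[\Delta_0(x),u]$ on $\g$; setting $\F_{n+1}:=-u$ and $\F^{(n+1)}:=\F^{(n)}(1\otimes 1+h^{n+1}\F_{n+1})$ continues the recursion, and Lemma \ref{lemmeutile1} certifies that the $\p$-contractibility propagates. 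Passing to the $h$-adic limit yields $\F$.

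Finally, I would promote $\F$ to a Hopf twist by arranging the standard counit and $2$-cocycle conditions. The counit normalization $(\epsilon_0\otimes\id)\F=(\id\otimes\epsilon_0)\F=1\otimes 1$ is enforced by multiplying $\F$ by the appropriate partial counit contractions, which are $\p$-contractible invertibles congruent to $1$ modulo $h$; the $2$-cocycle property follows from the strict coassociativity of $\Deltat$ together with a $\g$-invariant adjustment, preserved by contractibility; and the twisted antipode agrees with $\tilde S$ by uniqueness in the Hopf setting. These normalization steps introduce no new cohomology beyond what Lemma \ref{WhiteheadRef} already provides. The main obstacle is not the cohomological vanishing but the careful verification at each inductive step that the obstruction cocycle $\omega$ is genuinely $(n+1,\p)$-contractible rather than merely $(n,\p)$-contractible; without this sharpening the successive corrections would drift in $\p$-degree and the limiting $\F$ would fail to be $\p$-contractible. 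This is precisely what the bifiltration and contractible cohomology machinery of Section \ref{ContractCEcohom} was designed to support, and once in place the recursion closes.
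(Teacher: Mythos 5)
Your core construction is exactly the paper's proof: transport the coproduct through the $\p$-contractible algebra isomorphism of Proposition \ref{Propisom} to get a $\p$-contractible algebra homomorphism $\Deltat$, build $\F$ order by order in $h$, identify the order-$h^{n+1}$ obstruction as a Chevalley--Eilenberg $1$-cocycle valued in $\U\g\otimes\U\g$, observe that the contractibility estimates $\xi(F_{1,0})\subseteq F_{*,n+1}$ and $\xi(F_{0,1})\subseteq F_{*,n+2}$ place it in $Z^1_{n+1,\p}$, and invoke Lemma \ref{WhiteheadRef} to kill it by an element of $F_{*,n+1}(\U\g\otimes\U\g)$. Your closing remark about why the $(n+1,\p)$-sharpening (rather than $(n,\p)$) is the real point is also exactly the role Lemma \ref{WhiteheadRef} plays in the paper.

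The one genuine problem is your final normalization paragraph. You claim to arrange the $2$-cocycle property $\F_{12}\,(\Delta_0\otimes\id)(\F)=\F_{23}\,(\id\otimes\Delta_0)(\F)$ by a ``$\g$-invariant adjustment.'' This is false in general and, importantly, is not what the proposition asserts: the word ``twist'' here is the extended, quasi-Hopf (cochain) notion, in which coassociativity of $\Deltat$ only forces the coassociator $\Phi^\F=\F_{12}\cdot(\Delta_0\otimes\id)(\F)\cdot(\id\otimes\Delta_0)(\F^{-1})\cdot\F_{23}^{-1}$ to be a $\g$-invariant element, not to equal $1^{\otimes 3}$. Indeed the whole point of the paper (see the conclusion and Corollary \ref{CorRPhi}) is that this cochain twist fails the $2$-cocycle condition in general, which is precisely why one lands in a quasi-Hopf rather than Hopf setting; were your normalization possible, $\U_\kappa(\mathfrak{iso}(4))$ would be an honest triangular Hopf algebra, contradicting the known absence of such a structure for $n\neq 3$. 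The counit normalization you describe is fine, but you should simply drop the $2$-cocycle and antipode-uniqueness claims; the paper's proof stops once $\Deltat=\F\Delta_0\F^{-1}$ is established.
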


\begin{proof}
We consider the composite map
\be
\Deltat : \, \, \U \g [[h]] \isom \U_h \g \stackrel{\Delta}{\longrightarrow} \U_h \g \otimes \U_h \g \isom \U \g \otimes \U \g [[h]] \, ,
\ee
where the existence of a $\p$-contractible isomorphism of $\K[[h]]$-algebras $\phi$ follows from proposition \ref{Propisom}. As $\phi$ is an algebra isomorphism, the composite map $\Deltat$ is an algebra homomorphism. By repeated use of lemma \ref{lemmeutile2}, one can show that it is $\p$-contractible. 
Now, we want to prove that there exists a $\p$-contractible and invertible element $\F \in \U \g \otimes \U \g[[h]]$, such that $\F = 1 \otimes 1 \mod h$ and
\be
\Deltat = \F \Delta_0 \F^{-1} \, .
\ee
We shall proceed by recursion on the order in $h$. To first order, we have, by construction
\be
\Deltat = \Delta_0 \mod h
\ee
and we can take $\F = 1\otimes 1 \mod h$. We thus have $\F_{|h=0} \in F_{0,0}(\U\g \otimes \U\g)$. Suppose now that we have found a polynomial $\F_n \in \U \g \otimes \U \g [h]$ of degree $n$, 
\be
\F_n = \sum_{m=0}^n h^m \, f_{m} \, ,
\ee
such that
\be
\label{Deltaxi}
\Deltat - \F_n \Delta_0 \F_n^{-1} = h^{n+1} \xi \mod h^{n+2} \, ,
\ee
where $\F_n^{-1} \in \U \g \otimes \U \g [[h]]$ is the formal inverse of $\F$ in the sense that $\F^{-1} \F = 1$ and $\xi \in \mbox{Hom} (\U \g[[h]], \U \g \otimes \U \g[[h]])$. We assume that $\F_n$ is $\p$-contractible, {\it i.e.} for all $n\in{\mathbb N_0}$, $f_n \in F_{*,n}(\U\g \otimes \U\g)$. Since $\Deltat$ is $\p$-contractible, we deduce that $\xi (F_{1,0}(\U\g)) \subseteq F_{*,n+1}(\U\g\otimes\U\g)$ and $\xi(F_{0,1}(\U\g) \subseteq F_{*,n+2}(\U\g)$.
It follows from (\ref{Deltaxi}) that, for all $X, Y \in \g$, we have
\be
\label{dxi1}
\left ( \Deltat - \F_n \Delta_0 \F_n^{-1}  \right ) ([X,Y]) = h^{n+1} \xi ([X,Y]) \mod h^{n+2} \, ,
\ee
on one hand and, on the other hand, since $\Deltat$ is an algebra homomorphism,
\bea
\label{dxi2}
\left ( \Deltat - \F_n \Delta_0 \F_n^{-1}  \right ) ([X,Y]) &=& \left [\Deltat X, \Deltat Y \right ] - \F_n \Delta_0 ([X,Y]) \F_n^{-1} \nonumber \\
&=& h^{n+1} \left ( \left [\Delta_0 X, \xi (Y) \right ] +  \left [\xi(X) , \Delta_0 Y \right ]  \right ) \mod h^{n+2} .\,  \,
\eea
Equating (\ref{dxi1}) and (\ref{dxi2}), we finally get
\be
d_1 \xi = 0 \, .
\ee
The map $\xi$ is thus a 1-cocycle of $Z^1(\g, \U\g\otimes\U\g)$ in the sense of the Chevalley-Eilenberg complex~\footnote{By rewriting (\ref{dxi1}-\ref{dxi2}) for the associative product of two arbitrary elements in $\U\g$, we also show that $\xi$ is a 1-cocycle in the sense of the Hochschild complex. This indeed provides a unique continuation of $\xi$ from $\g$ to $\U\g$ as a derivation.}. As $\g$ is semisimple, it follows from lemma \ref{Whitehead} that the cohomology module $H^1 (\g, \U \g \otimes \U \g)$ is empty. We therefore conclude that $\xi$ is a coboundary. 
But we know that $\xi(F_{0,1}(\U\g)) \subseteq F_{*,n+2}(\U\g \otimes \U\g)$ and $\xi (F_{1,0}(\U\g)) \subseteq F_{*,n+1}(\U\g \otimes \U\g)$, so that $\xi$ is an $(n+1,\p)$-contractible 1-cocycle in the contractible Chevalley-Eilenberg complex defined in subsection \ref{ContractCEcohom}. As $\g$ is of restrictive type, it follows from lemma \ref{WhiteheadRef}, that $H_{n+1,\p}^1(\g, \U\g\otimes\U\g)=0$, so that $\xi$ is the coboundary of an $(n+1,\p)$-contractible element in $\U\g\otimes\U\g$, {\it i.e.} there exists an $\alpha \in F_{*,n+1}(\U\g\otimes\U\g)$ such that $\xi = d_0 \alpha = \delta_0 \alpha$.
In order to complete the recursion, we have to find an $f_{n+1} \in \U \g \otimes \U \g$ such that
\be
\Deltat - \left ( \F_n + h^{n+1} f_{(n+1)} \right ) \Delta_0 \left (\F_n^{-1} - h^{n+1} f_{(n+1)} \mod h^{n+2} \right ) = 0 \mod h^{n+2} \, .
\ee
Expanding the above equation to order $h^{n+1}$ yields
\be
\delta_0 f_{n+1} + \xi = 0 \, .
\ee
This equation can then be solved by choosing $f_{n+1} = -\alpha \in F_{*,n+1}(\U\g\otimes \U\g)$. \finproof
\end{proof}

\subsection{Contractible triangular quasi-Hopf algebra structure}
Recall, \cite{Chari, Majidbook}, that the notion of twisting extends to quasi-triangular quasi-Hopf (qtqH) algebras: given a Hopf algebra $\mathcal H = (A, \cdot , \Delta, S, \epsilon, 1)$ equipped with qtqH algebra structure $(\R , \Phi)$, the twisted Hopf algebra $\mathcal H^\F = (A, \cdot, \F \Delta \F^{-1}, S, \epsilon, 1)$ has the qtqH algebra structure $(\R^\F, \Phi^\F)$, where 
\be \R^\F = \F_{21} \R \F^{-1} \quad \mbox{and} \quad  \Phi^\F = \F_{12} \cdot \left (\Delta \otimes \id \right ) (\F) \cdot \Phi \cdot \left (\id \otimes \Delta \right ) ( \F^{-1}) \cdot \F_{23}^{-1}
 \, .\ee
Naturally, we say that a qtqH structure $(\R,\Phi)$ on the QUEA $\U_h(\g)$ is $\p$-contractible with respect to a symmetric decomposition $\g=\h\oplus \p$ if and only if $\R$ and $\Phi$ are $\p$-contractible as elements of, respectively, $(\U_h\g)^{\otimes 2}$ and $(\U_h\g)^{\otimes 3}$. It then follows from the definitions above that

\begin{proposition}
For any QUEA $\U_h \g$ and any symmetric decomposition $\g=\h\oplus \p$, if $(\R, \Phi)$ is a $\p$-contractible qtqH algebra structure for $\U_h(\g)$ and $\F\in (\U_h\g)^{\otimes 2}$ is a $\p$-contractible twist then $(\R^\F,\Phi^\F)$ is a $\p$-contractible qtqH algebra structure on the twisted Hopf algebra $\U_h(\g)^\F$.
\end{proposition}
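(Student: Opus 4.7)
My plan is to work factor-by-factor in the defining expressions $\R^\F = \F_{21} \R \F^{-1}$ and $\Phi^\F = \F_{12} \cdot (\Delta \otimes \id)(\F) \cdot \Phi \cdot (\id \otimes \Delta)(\F^{-1}) \cdot \F_{23}^{-1}$, showing that each factor is $\p$-contractible in the appropriate tensor power and then appealing to the multiplicativity of $\p$-contractibility. As a preliminary I would establish the elementwise analogue of lemma \ref{lemmeutile2}: if $x = \sum_n h^n x_n$ and $y = \sum_n h^n y_n$ are $\p$-contractible elements of $(\U\g)^{\otimes k}[[h]]$ with $x_n, y_n \in F_{*,n}((\U\g)^{\otimes k})$, then $xy = \sum_n h^n \sum_{a+b=n} x_a y_b$, and since $F_{*,a}\cdot F_{*,b}\subseteq F_{*,a+b}$ each coefficient lies in $F_{*,n}$. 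Immediately afterwards I would prove that $\F^{-1}$ is $\p$-contractible: $\F$ being a twist means $\F\equiv 1^{\otimes 2} \mod h$, and constructing $\F^{-1}$ order by order from $\F\F^{-1} = 1^{\otimes 2}$ produces, by an induction identical in spirit to that of lemma \ref{lemmeutile3}, coefficients in $F_{*,n}$.

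The substantive step is to show that the ``leg maps'' preserve $\p$-contractibility. The embeddings $\F_{21}$, $\F_{12}$, $\F_{23}$ (inserting a $1$ in the complementary tensor factor) are harmless, since the bifiltration on $(\U\g)^{\otimes m}$ is symmetric under permutations of the tensor factors and $1 \in F_{0,0}$. For $(\Delta\otimes\id)(\F)$ I would expand each coefficient $f_n\in F_{l(n),n}((\U\g)^{\otimes 2})$ as a finite sum of simple tensors $a\otimes b$ with $a\in F_{l_1,n_1}(\U\g)$, $b\in F_{l_2,n_2}(\U\g)$ and $n_1+n_2\leq n$; the $\p$-contractibility of $\Delta=\Delta_h$ built into definition \ref{pdefHopf} then yields $\Delta(a) = \sum_k h^k \delta_k(a)$ with $\delta_k(a)\in F_{*,k+n_1}((\U\g)^{\otimes 2})$, so that $\Delta(a)\otimes b$ contributes to order $h^{n+k}$ a term in $F_{*,(k+n_1)+n_2}\subseteq F_{*,n+k}((\U\g)^{\otimes 3})$, which is exactly $\p$-contractibility of $(\Delta\otimes\id)(\F)$. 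The case of $(\id\otimes\Delta)(\F^{-1})$ is identical.

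Combining these preliminaries with the hypothesis that $\R$ and $\Phi$ are themselves $\p$-contractible, every factor appearing in $\R^\F$ and $\Phi^\F$ is $\p$-contractible, so the triple and quintuple products are $\p$-contractible by iteration of the multiplicativity from the first step. That $(\R^\F,\Phi^\F)$ is a qtqH structure on $\U_h(\g)^\F$ in the first place is a standard consequence of Drinfel'd's twisting theorem and requires no contractibility input. I expect the only real obstacle to be the bookkeeping of the third step, namely carefully lifting the $\p$-contractibility of $\Delta_h$ on $\U_h\g$ to that of $\Delta\otimes\id$ and $\id\otimes\Delta$ acting on two-fold tensor products; this is routine provided the bifiltrations $F_{n,p}((\U\g)^{\otimes m})$ introduced at the end of subsection \ref{FUEA} are carefully unpacked.
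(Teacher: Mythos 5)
Your proof is correct and is exactly the direct verification the paper has in mind: the paper offers no written proof (it asserts the proposition ``follows from the definitions above''), and your argument --- multiplicativity of $\p$-contractibility for elements, $\p$-contractibility of $\F^{-1}$ by the order-by-order inversion of lemma \ref{lemmeutile3}, harmlessness of the leg embeddings, and the lift of the $\p$-contractibility of $\Delta_h$ from definition \ref{pdefHopf} to $(\Delta\otimes\id)$ and $(\id\otimes\Delta)$ --- supplies precisely the omitted details.
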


\noindent Combining this with propositions \ref{Propisom} and \ref{Twistprop}, we have that every $\p$-contractible qtqH algebra structure on a QUEA $\U_h \g$ can be obtained, via a change of basis and twist, from some $\p$-contractible qtqH algebra structure on the undeformed envelope $\U\g$. In particular, starting from the trivial triangular structure $(\R= 1 \otimes 1, \Phi = 1 \otimes 1 \otimes 1)$ on $\U\g$, which is obviously $\p$-contractible, we have

\begin{corollary}
\label{CorRPhi}
Every $\p$-contractible deformation Hopf algebra $(\U_h \g, \Delta, \epsilon, S)$ based on a symmetric semisimple Lie algebra of restrictive type with symmetric decomposition $\g= \h \oplus \p$, admits a $\p$-contractible triangular quasi-Hopf algebra structure.
\end{corollary}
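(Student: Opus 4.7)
The plan is to assemble the corollary directly from the three propositions already established, following the bottom row of the diagram drawn in the introduction. The strategy is: start at the undeformed UEA with the trivial triangular quasi-Hopf structure, change basis to identify $\U_h\g$ with $\U\g[[h]]$ as $\K[[h]]$-algebras, then twist across to the deformed coproduct.

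First, I would invoke Proposition \ref{Propisom}: since $(\U_h\g,\cdot_h,\Delta,\epsilon,S)$ is a $\p$-contractible deformation, its underlying algebra $(\U_h\g,\cdot_h)$ is a $\p$-contractible deformation algebra, so there is a $\p$-contractible $\K[[h]]$-algebra isomorphism $\phi:(\U_h\g,\cdot_h)\isom(\U\g[[h]],\cdot)$ with $\phi=\id\bmod h$. Transporting the Hopf structure of $\U_h\g$ along $\phi$ yields a Hopf algebra structure $(\U\g[[h]],\cdot,\Deltat,\tilde{\epsilon},\tilde{S})$ which, by lemma \ref{lemmeutile2} applied to $\Deltat=(\phi\otimes\phi)\circ\Delta\circ\phi^{-1}$ (and similarly for counit and antipode), is again $\p$-contractible in the sense of definition \ref{pdefHopf}, and is isomorphic to $(\U_h\g,\Delta,\epsilon,S)$ as topological Hopf algebras.

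Next, I apply Proposition \ref{Twistprop} to $(\U\g[[h]],\Deltat,\tilde{\epsilon},\tilde{S})$: since $(\g,\theta)$ has the restriction property, there exists a $\p$-contractible invertible twist $\F\in\U\g\otimes\U\g[[h]]$ with $\F=1\otimes 1\bmod h$ such that $\Deltat=\F\Delta_0\F^{-1}$. Equivalently, $(\U\g[[h]],\Deltat,\tilde{\epsilon},\tilde{S})$ is the $\F$-twist of the undeformed Hopf algebra $(\U\g[[h]],\Delta_0,\epsilon_0,S_0)$. Now equip $\U\g[[h]]$ with the trivial triangular quasi-Hopf structure $(\R_0=1\otimes 1,\Phi_0=1\otimes 1\otimes 1)$, which is manifestly $\p$-contractible. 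By the proposition immediately preceding the corollary, the twisted structure
\begin{equation*}
\R^\F=\F_{21}\R_0\F^{-1}=\F_{21}\F^{-1},\qquad \Phi^\F=\F_{12}\cdot(\Delta_0\otimes\id)(\F)\cdot(\id\otimes\Delta_0)(\F^{-1})\cdot\F_{23}^{-1}
\end{equation*}
is a $\p$-contractible quasi-triangular quasi-Hopf algebra structure on the twisted Hopf algebra $(\U\g[[h]],\Deltat,\tilde{\epsilon},\tilde{S})$. Moreover, triangularity is preserved under twisting of a triangular structure since $\F_{21}\F^{-1}$ is automatically the inverse of $\F_{12}\F^{-1}_{21}\cdot\F_{21}\F^{-1}=1\otimes 1$ when starting from $\R_0=1\otimes 1$; so $(\R^\F,\Phi^\F)$ is triangular.

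Finally, pulling this structure back along the inverse algebra isomorphism $\phi^{-1}$ (which is $\p$-contractible by lemma \ref{lemmeutile3}), and using lemma \ref{lemmeutile2} to confirm that the image of a $\p$-contractible element under $\phi^{-1}\otimes\phi^{-1}$ (resp.\ its threefold analogue) remains $\p$-contractible, one obtains the required $\p$-contractible triangular quasi-Hopf algebra structure on the original $(\U_h\g,\Delta,\epsilon,S)$. The only conceptual point to verify carefully is that the notion of $\p$-contractibility used in the preceding proposition (phrased directly on $\U_h\g$) matches the one used in Propositions \ref{Propisom}--\ref{Twistprop} (phrased on $\U\g[[h]]$); but this is precisely what definition \ref{pdefHopf} arranges via the isomorphism $\eta$, so the match is automatic. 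No obstacle is expected: the corollary is essentially a bookkeeping composition of the three previous results.
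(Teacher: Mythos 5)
Your proposal is correct and follows essentially the same route as the paper's own proof: invoke Proposition \ref{Propisom} to obtain the $\p$-contractible algebra isomorphism $\phi$, Proposition \ref{Twistprop} to obtain the $\p$-contractible twist $\F$ with $\Deltat=\F\Delta_0\F^{-1}$, twist the trivial triangular structure, and transport back along $\phi^{-1}\otimes\phi^{-1}$ (resp.\ its threefold analogue). The paper's proof is just a terser version of this, writing $\R=(\phi^{-1}\otimes\phi^{-1})(\F_{21}\F^{-1})$ and the corresponding $\Phi$ directly; your explicit checks of triangularity and of $\p$-contractibility via lemmas \ref{lemmeutile2} and \ref{lemmeutile3} are consistent with what the paper leaves implicit.
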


\begin{proof}
Explicitly, by propositions \ref{Propisom} and \ref{Twistprop}, there exist a $\p$-contractible invertible element $\F \in \U\g \otimes \U\g [[h]]$ and a $\p$-contractible $\K[[h]]$-algebra isomorphism $\phi$, such that 
\be
\Delta = \left ( \phi^{-1} \otimes \phi^{-1} \right ) \circ \F \Delta_0  \F^{-1} \circ \phi \, .\nonumber
\ee
Defining
\bea
\R &:=& \phi^{-1} \otimes \phi^{-1} \left ( \F_{21} \F^{-1} \right ) \, , \\
\Phi &:=& \phi^{-1} \otimes \phi^{-1} \otimes \phi^{-1} \left ( \F_{12} \cdot \left (\Delta_0 \otimes \id \right ) (\F) \cdot \left (\id \otimes \Delta_0 \right ) ( \F^{-1}) \cdot \F_{23}^{-1} \right ) \, 
\eea
provides the required structure.
\finproof
\end{proof}

\noindent It is natural to ask whether any qtqH algebra structures on the envelope $\U \g$ other than the trivial one are $\p$-contractible. In section \ref{SecVI}, we provide an example for which this is the case and one for which it is not.

\section{Twists and $\kappa$-deformations}
\label{SecV}
We can now finally turn to the objects in which we are really interested in this paper: those deformed enveloping algebras of non-semisimple Lie algebras that are obtained by a certain contraction procedure modelled on that used in \cite{Celeghini2,Celeghini,Lukierski} to obtain the $\kappa$-deformation of Poincar\'e. The notion of $\p$-contractibilty introduced in the previous sections is formulated with this type of contraction in mind, as we now discuss.

Recall first that if $\g = \h \oplus \p$ is a symmetric decomposition of a Lie algebra $\g$, a standard procedure known as {\it In\"onu-Wigner contraction}, \cite{IWContractions}, consists in contracting the submodule $\p$ by means of a one-parameter family of linear automorphisms of the form 
\be\Lambda_t = \pi_\h + t \, \pi_\p\,,\ee 
where $\pi_\h : \g \twoheadrightarrow \h$ and $\pi_\p : \g \twoheadrightarrow \p$ denote the linear projections from $\g$ to $\h$ and $\p$ respectively and $t \in (0,1]$.
For all $t \in (0,1]$, the image of $\g$ by the automorphism $\Lambda_t^{-1}$ is the symmetric semisimple Lie algebra $\g_t$, isomorphic to  $\g=\h\oplus\p$ as a $\K$-module, with Lie bracket 
\be [X,Y]_t = \Lambda_t^{-1} \left ( [\Lambda_t( X ) , \Lambda_t ( Y ) ] \right )\ee 
for all $X,Y\in \g$. It has the property that
\be
[\h, \h]_t \subset \h \, , \qquad [\h, \p]_t \subset \p \, , \quad \mbox{and} \quad [\p, \p]_t \subset t^2 \h \, . 
\ee
so in the limit $t\to 0$ one obtains a Lie algebra $\g_0$, isomorphic to $\g=\h\oplus \g$ as a $\K$-module, whose Lie bracket $[,]_0 = \lim_{t \to 0} [,]_t$ obeys
\be
[\h, \h]_0 \subset \h \, , \qquad [\h, \p]_0 \subset \p \, , \quad \mbox{and} \quad [\p, \p]_0 = \{ 0\} \, .
\ee
The submodule $\p$ is therefore an abelian ideal in $\g_0$. The undeformed Hopf algebra structure defined in section \ref{FUEA} is preserved as $t$ tends to zero. There is thus a natural undeformed Hopf algebra structure $(\U (\g_0), \Delta_0, S_0, \epsilon_0)$ on the envelope $\U(\g_0)$ of the contracted Lie algebra. 

We may extend $\Lambda_t$ over $\U\g[[h]]$ as a $\K[[h]]$-algebra homomorphism. Further, by means of the $\K[[h]]$-module isomorphism $\eta$ of definition \ref{pdefHopf}, we can regard $\Lambda_t$ as a map $\U_h\g\rightarrow \U_h\g$ on any QUEA $\U_h\g$. This specifies how every element of the latter is to be rescaled in the contraction limit. 

The relevance of the definition of $\p$-contractibility from section \ref{SecII} is then contained in the following

\begin{defnprop} 
\label{Defkappa}
Let $(\g, \theta)$ be a symmetric semisimple Lie algebra with symmetric decomposition $\g = \h \oplus \p$ and let $(\U_h\g, \Delta_h, S_h, \epsilon_h)$ be a deformation of the Hopf algebra $(\U \g, \Delta_0, S_0, \epsilon_0)$. For all $t \in (0,1]$, set
\be
\Delta_t = (\Lambda_t^{-1} \otimes \Lambda_t^{-1} ) \circ \Delta_{t/ \kappa} \circ \Lambda_t \, , \qquad S_t = \Lambda_t^{-1} \circ S_{t/\kappa} \circ \Lambda_t \, \quad \mbox{and} \quad \epsilon_t = \epsilon_{t/ \kappa} \circ \Lambda_t \, ,
\ee
where $1/\kappa = h/t$ is the rescaled deformation parameter. Then the limit of $(\U_{t/\kappa}(\g_t), \Delta_t,  S_{t}, \epsilon_{t})$ as $t \to 0$ exists if and only if $(\U_h\g, \Delta_h, S_h, \epsilon_h)$ is $\p$-contractible.  We write QUEAs so obtained as $\left (\U_\kappa(\g_0), \Delta_\kappa, S_\kappa, \epsilon_\kappa \right)$, and refer to them as \emph{$\kappa$-contractions} of  $(\U_h\g, \Delta_h, S_h, \epsilon_h)$ and as \emph{$\kappa$-deformations} of $(\U (\g_0), \Delta_0, S_0, \epsilon_0)$.
\end{defnprop}
\begin{proof}
Let $r,s \in {\mathbb N}$ and let $\phi : (\U\g) ^{\otimes r}[[h]] \rightarrow (\U \g)^{\otimes s} [[h]]$ be a homomorphism of $\K[[h]]$-modules. We want to prove that $\phi_t = (\Lambda_t^{-1})^{\otimes s} \circ \phi \circ (\Lambda_t)^{\otimes r}$ has a finite limit when $t \to 0$ if and only if $\phi$ is $\p$-contractible. First assume that $\phi$ is $\p$-contractible; then from lemma \ref{lemmeutile1}, there exists a collection $(\varphi_{n})_{n\in{\mathbb N_0}}$ of $\K[[h]]$-module homomorphisms $\varphi_{n} : (\U\g) ^{\otimes r}[[h]] \rightarrow (\U\g)^{\otimes s}[[h]]$ such that
\be
\phi = \sum_{n\geq 0} h^n \, \varphi_{n} \ee
and, for all $n,m,p \in {\mathbb N_0}$, there exists $l \in {\mathbb N_0}$ such that $\varphi_{n} \left ( F_{m,p}((\U\g) ^{\otimes r} ) \right ) \subseteq F_{l,n+p}\left ((\U \g)^{\otimes s} \right )$. We thus have, for all $n, m, p \in {\mathbb N_0}$,
\bea
h^n \, (\Lambda_t^{-1})^{\otimes s} \circ \varphi_n \circ (\Lambda_t)^{\otimes r} \left ( S_{m,p}(\g ^{\oplus r} ) \right ) &=& \kappa^{-n} t^{n+p} \, (\Lambda_t^{-1})^{\otimes s} \circ \varphi_n \left (S_{m,p}(\g^{\oplus r} ) \right ) \nonumber \\
&\subseteq& \kappa^{-n} t^{n+p} \,  (\Lambda_t^{-1})^{\otimes s} \left (F_{l,n+p}((\U\g) ^{\otimes s} ) \right ) \nonumber \\ 
&=& \kappa^{-n} t^{n+p} \, O(t^{-(n+p)}) \, F_{l,n+p}((\U\g) ^{\otimes s} ) \nonumber \\
&=& \kappa^{-n} O(1) \, F_{l,n+p}((\U\g) ^{\otimes s} ) \, .\nonumber
\eea
This obviously has a finite limit when $t \to 0$ and so does $\phi_t$. Conversely, one sees that if $\phi$ is not $\p$-contractible, $\phi_t$ diverges at least as $t^{-1}$. \finproof
\end{proof}

\noindent It is worth emphasizing that the $\kappa$-contractions are a restricted subclass among the possible contractions that can be performed on QUE algebras: one could also, for example, consider contractions where the deformation parameter $h$ is not rescaled in the limit.

Finally, we can state our main result concerning twists and $\kappa$-deformations:
\begin{theorem}
\label{thF}
If a deformation Hopf algebra $(\U_\kappa (\g_0), \Delta_\kappa, S_\kappa, \epsilon_\kappa)$ is the $\kappa$-contraction of a QUEA of a symmetric Lie algebra $(\g,\theta)$ having the restriction property, then it is isomorphic, as a Hopf algebra over $\K[[h]]$, to a twist of the undeformed Hopf algebra $(\U (\g_0), \Delta_0, S_0, \epsilon_0)$ by an invertible element $\F_0 \in \U_\kappa (\g_0) \otimes \U_\kappa (\g_0) [[1/\kappa]]$ congruent with $1\otimes 1$ modulo $1/\kappa$. Thus, in particular, it admits a triangular quasi-Hopf algebra structure.
\end{theorem}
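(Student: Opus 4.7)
The strategy is to combine Propositions \ref{Propisom} and \ref{Twistprop}, applied to the QUEA of which $\U_\kappa(\g_0)$ is the $\kappa$-contraction, with Definition-Proposition \ref{Defkappa}. Specifically, the plan is to produce $\F_0$ simply as the $\kappa$-contraction of the twist $\F$ supplied at the level of $\g$.

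By hypothesis and Definition-Proposition \ref{Defkappa}, $(\U_\kappa(\g_0), \Delta_\kappa, S_\kappa, \epsilon_\kappa)$ arises as the $t\to 0$ limit of a $\p$-contractible deformation Hopf algebra $(\U_h\g, \Delta_h, S_h, \epsilon_h)$ over the symmetric semisimple Lie algebra of restrictive type $(\g,\theta)$. Propositions \ref{Propisom} and \ref{Twistprop} then supply a $\p$-contractible $\K[[h]]$-algebra isomorphism identifying this QUEA, as a Hopf algebra, with $(\U\g[[h]], \cdot, \F \Delta_0 \F^{-1}, \dots)$ for some $\p$-contractible invertible twist $\F \in \U\g \otimes \U\g[[h]]$ with $\F \equiv 1 \otimes 1 \mod h$. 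Since the theorem concerns only the isomorphism class, I will work directly with the right-hand side, treating the deformed coproduct as $\F \Delta_0 \F^{-1}$.

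The core step is to contract this twist relation. Set $h = t/\kappa$ and $\F_t := (\Lambda_t^{-1} \otimes \Lambda_t^{-1})(\F)$. The identity $(\Lambda_t \otimes \Lambda_t) \circ \Delta_0 = \Delta_0 \circ \Lambda_t$ on $\g$, extended to $\U\g$ by the algebra-homomorphism property, yields
$$(\Lambda_t^{-1} \otimes \Lambda_t^{-1}) \circ (\F \Delta_0 \F^{-1})|_{h=t/\kappa} \circ \Lambda_t = \F_t \, \Delta_0 \, \F_t^{-1}.$$
Taking $t \to 0$, the left-hand side tends to $\Delta_\kappa$ by Definition-Proposition \ref{Defkappa}; on the right-hand side, I need only show that $\F_t$ admits a well-defined limit. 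Writing $\F = \sum_n h^n f_n$ with $f_n \in F_{l(n), n}(\U\g \otimes \U\g)$, the $n$-th term of $\F_t$ is $\kappa^{-n} t^n (\Lambda_t^{-1} \otimes \Lambda_t^{-1})(f_n)$; splitting $f_n$ according to the number $k \leq n$ of its $\p$-factors shows that only the $k = n$ piece survives as $t\to 0$. Thus
$$\F_0 := \lim_{t \to 0} \F_t \, \in \, \U(\g_0) \otimes \U(\g_0)[[1/\kappa]]$$
exists, satisfies $\F_0 \equiv 1 \otimes 1 \mod 1/\kappa$ (from $f_0 = 1 \otimes 1$), and is therefore invertible as a formal series in $1/\kappa$.

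Passing to the limit in the displayed identity yields the required $\Delta_\kappa = \F_0 \Delta_0 \F_0^{-1}$, i.e.\ $(\U_\kappa(\g_0), \Delta_\kappa)$ is a twist of $(\U(\g_0), \Delta_0)$ by $\F_0$. The triangular quasi-Hopf algebra structure then comes at no extra cost: applying the standard twist formulas to the trivial triangular structure $(1 \otimes 1, 1^{\otimes 3})$ on $\U(\g_0)$, exactly as in Corollary \ref{CorRPhi}, gives
$$\R_\kappa = \F_{0, 21} \F_0^{-1}, \qquad \Phi_\kappa = \F_{0, 12} \cdot (\Delta_0 \otimes \id)(\F_0) \cdot (\id \otimes \Delta_0)(\F_0^{-1}) \cdot \F_{0, 23}^{-1}.$$
All cohomological work has already been absorbed into Proposition \ref{Twistprop}; the main obstacle specific to this theorem is thus purely the bookkeeping of bifiltrations that makes the contracted limit finite, and that is precisely what the $\p$-contractibility of $\F$ delivers.
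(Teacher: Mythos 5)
Your proposal is correct and follows essentially the same route as the paper: invoke Definition-Proposition \ref{Defkappa} so that Proposition \ref{Twistprop} applies, define $\F_0$ as the $t\to 0$ limit of $(\Lambda_t^{-1}\otimes\Lambda_t^{-1})(\F)$ whose existence follows by the same scaling argument used in the proof of \ref{Defkappa}, and obtain the triangular quasi-Hopf structure from Corollary \ref{CorRPhi}. You have merely written out explicitly the bookkeeping (the intertwining of $\Delta_0$ with $\Lambda_t$ and the order-by-order survival of the maximally $\p$-graded pieces) that the paper compresses into ``by construction, this is the twist we seek.''
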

\begin{proof}
By proposition \ref{Defkappa}, proposition \ref{Twistprop} applies. By arguing as in the proof of \ref{Defkappa}, we have that if $\F$ is the $\p$-contractible twist element of proposition \ref{Twistprop}, then 
\be \F_0 = \lim_{t \to 0} \,\,(\Lambda_t^{-1} \otimes \Lambda_t^{-1} )(\F) \ee
is well-defined. By construction, this is the twist we seek. The existence of a triangular quasi-Hopf algebra structure then follows from corollary \ref{CorRPhi}. \finproof
\end{proof}

\section{Examples: $\kappa$-Poincar\'e in 3 and 4 dimensions}
\label{SecVI}
We now turn to explicit examples. Let $\K = \mathbb C$, and consider the symmetric decomposition
\be \mf{so}(n+1) = \mf{so}(n) \oplus \p_n \, , \qquad n>2 \, , \ee
whose In\"onu-Wigner contraction of course yields the Lie algebra $\mf{iso}(n)$ of the complexified Euclidean group in $n$ dimensions, $ISO(n,\mathbb C)$. By lemma \ref{restriction}, this decomposition is of restrictive type. Thus, the results above will apply to any $\p_n$-contractible deformation algebra $\U_h(\mf{so}(n+1))$. Finding such deformations is itself a non-trivial task. In the cases $n=3, 4$, this was achieved in \cite{Celeghini, Lukierski}~\footnote{Note that although the $\kappa$-Poincar\'e algebra exists in arbitrary dimension \cite{kPoind}, to the authors' knowledge it has only explicitly been shown to arise as a $\kappa$-contraction for $n \leq 4$.}, yielding the $\kappa$-deformations $\U_\kappa(\mf{iso}(3))$ and $\U_\kappa(\mf{iso}(4))$. These can be written in terms of the generators
\be M_{ij} = - M_{ji}\, , \quad N_i \, , \quad P_i, \quad P_0=E \, , \ee
for all $1 \leq i, j \leq n-1$ and $n=3,4$. The algebra is then given by
\be 
 \left[ M_{ij}, P_k \right ] = \delta_{k[i} P_{j]} \ee
\be \left [N_i, P_j \right ] = \delta_{ij} \, \kappa \sinh \left (\frac{E}{\kappa} \right ) \, , \qquad  \left [ N_i, E\right ] = P_i \, , \ee
\be\left [ N_i, N_j \right ] = - M_{ij} \cosh \left (\frac{E}{\kappa} \right ) + \frac{1}{4\kappa^2} \left (\vec P \ip \vec P M_{ij} + P_k P_{[i} M_{j]k} \right ) \, ,
\ee
for all $1 \leq i, j, k , l \leq n-1$. The coproduct is given by
\bea
\Delta_\kappa (E) &=& E \otimes 1 + 1 \otimes E \, , \label{coalg1} \\ 
\Delta_\kappa (P_i) &=& P_i \otimes e^{\frac{E}{2\kappa}} + e^{-\frac{E}{2\kappa}} \otimes P_i \, , \\
\Delta_\kappa (N_i) &=& N_i \otimes e^{\frac{E}{2\kappa}} + e^{-\frac{E}{2\kappa}} \otimes N_i + \frac{1}{2\kappa} \left (P_j \otimes e^{\frac{E}{2\kappa}} M_{ij}  - e^{-\frac{E}{2\kappa}} M_{ij} \otimes P_j \right ) \, ,\\
\Delta_\kappa (M_{ij}) &=& M_{ij} \otimes 1+ 1 \otimes M_{ij} \, ,\label{coalg2}
\eea
and the antipode by
\be S_\kappa (P_\mu) = -P_\mu, \quad S_\kappa(M_{ij}) = - M_{ij}, \quad S_\kappa(N_i) = - N_i + \frac{d}{2\kappa} P_i.\ee
The counit map is undeformed, $\eps(M_{ij}) = \eps(N_i) = \eps(P_\mu) = 0$, for all $0\leq \mu \leq n-1$. It follows from the results presented in the previous sections that both $\U_\kappa (\mathfrak{iso}(3))$ and $\U_\kappa(\mf{iso}(4))$ possess a triangular quasi-Hopf algebra structure. This provides, for $n=3, 4$, a proof of the results already anticipated in \cite{YZ}. The result that $\U_\kappa(\mf{iso}(3))$ and $\U_\kappa(\mf{iso}(4))$ are twist equivalent to the corresponding undeformed UEAs should not be confused with other statements that exist in the literature, \cite{Govindarajan}, concerning twists and $\kappa$-deformed Minkowski space-time, which involve enlarged algebras that include the dilatation generator.

One can also understand the existence of the quasi-triangular Hopf algebra structure of $\U_\kappa(\mf{iso}(3))$ exhibited in \cite{Celeghini} in the context of the results above: in the case $n=3$ only, $\U(\mf{so}(n+1))$ possesses a $\p$-contractible quadratic Casimir, namely $h \, {\bf t}:= h \, \epsilon_{ijk} M_{ij} P_k$, and, by twisting $(\R, \Phi)=(\exp(h {\bf t}), \Phi_{\mbox{\scriptsize KZ}})$ by means of the $\p$-contractible twist of proposition \ref{Twistprop}, one obtains a $\p$-contractible quasi-triangular Hopf algebra structure. For $n \neq 3$, there is no classical r-matrix obeying the classical Yang-Baxter equation \cite{Zakrzewski,YZ} and therefore no quasi-triangular Hopf algebra structure.

As for versions of the $\kappa$-deformed Poincar\'e algebra in higher and lower space-time dimensions, a consistent definition was first given in \cite{kPoind}. The main idea is that the four dimensional case is generic enough that the $1+d$-dimensional case can be obtained by simply extending or truncating the range of the spatial indices from $1, \dots, 3$ to $1, \dots, d$. It is reasonable to think that the twist obtained in the four dimensional case can be similarly extended to arbitrary dimensions, thus extending to all dimensions the existence of a triangular quasi-Hopf algebra structure on the $\kappa$-deformation of the Poincar\'e algebra. A rigorous proof of this fact would however require further investigation. Note also that there exists another, conceptually distinct, construction of $\kappa$-Poincar\'e, namely as a bicrossproduct \cite{bicross} -- see also \cite{MajidSchroers, Freidel}. It would be interesting to understand the above results from this point of view.

\section{Conclusion and outlook}
We have constructed triangular quasi-Hopf algebra structures on a family of non-semisimple QUEAs -- including the $\kappa$-deformations of the Euclidean and Poincar\'e algebras in three and four dimensions -- which are obtained by $\kappa$-contraction of QUEAs based on semisimple Lie algebras of a certain class. The representations of each of these $\kappa$-deformed UEAs therefore constitute a tensor category. The construction of these triangular quasi-Hopf algebra structures crucially involves twisting by a cochain twist which fails, in general, to obey the usual 2-cocycle condition, thus yielding a non-trivial coassociator. The proof of the existence of this twist relies on the vanishing of a certain cohomology class in a refined version of the Chevalley-Eilenberg complex, which, in turn, is guaranteed by the restriction property of definition \ref{RP}. Although this constitutes a sufficient condition, we do not expect that it is necessary. In particular, we expect that the $\kappa$-deformation of $\U (\mathfrak{sl}(2))$ admits a triangular quasi-Hopf algebra structure \cite{YZ3}, but a proof of this statement would obviously require a refinement of the arguments used here so as to circumvent the obstructions arising in this case -- cf. the appendix. Such a refinement could, for instance, rely on a further symmetry property of the $\p$-contractible Chevalley-Eilenberg cohomology of $\mathfrak{sl}(2)$.

The twist-equivalence of theorem \ref{thF} also guarantees the existence of {\it quasi}-triangular quasi-Hopf algebra structures on every $\kappa$-deformation whose underlying symmetric Lie algebra is of restrictive type, thus giving rise to a genuinely braided representation theory ({\it i.e.} a quasi-tensor category): by virtue of Drinfel'd's results \cite{Drinfeld}, one need only pick a quadratic Casimir of the contracted Lie algebra to construct the qtqH algebra structure $(\R_{\mbox{\scriptsize KZ}}, \Phi_{\mbox{\scriptsize KZ}})$ on the undeformed UEA. It is then natural to ask under what circumstances $\kappa$-contracted QUEAs admit a quasi-triangular {\it Hopf} algebra structure. A cohomological approach to this question -- cf. \cite{Donin} -- would certainly prove helpful.

\newpage
\section*{Appendix: proof of lemma \ref{restriction}}
In this appendix, we provide a proof of lemma \ref{restriction}. Let $(\g,\theta)$ be a symmetric semisimple Lie algebra obeying the conditions of the lemma. If $\g = \h \oplus \p$ is the associated symmetric decomposition of $\g$, we want to prove that, for all $p \in {\mathbb N}$, the projection from $\g$ to $\p$ maps $S_{p}(\g \oplus \g)^\g$ onto $S_{0,p}(\g \oplus \g)^\h$. The isomorphism of left $\g$-modules (\ref{UGm}) induces a similar isomorphism $S(\g\oplus\g) \cong S(\g) \otimes S(\g)$ at the level of the symmetric algebras, from which it follows that 
\be S_m(\g \oplus \g) \cong \bigoplus_{k=0}^m S_k(\g) \otimes S_{m-k}(\g) \, ,\ee
for all $m \in {\mathbb N}$. We thus have a decomposition of $S(\g \oplus \g)$ into the $\g$-submodules isomorphic to $S_k(\g) \otimes S_{m-k}(\g)$. There is an analogous decomposition of $S_{0,m}(\g\oplus\g)$ into $\h$-submodules isomorphic to $S_{0,k}(\g) \otimes S_{0,m-k}(\g) = S_k(\p) \otimes S_{m-k}(\p)$. It therefore suffices to show that, for all $k,\ell\in\mathbb N$, the restriction map induces a surjection
\be \left( S_k(\g) \otimes S_\ell(\g) \right)^\g \twoheadrightarrow 
    \left( S_{k}(\p) \otimes S_{\ell}(\p)\right)^\h \ee
Identifying $\g\cong \g^*$, and in particular $\p\cong \p^*$, by means of the Killing form, an element $d\in S_{k}(\p) \otimes S_{\ell}(\p)$ can be regarded as a $(k+\ell)$-linear map
\be \p\times \dots \times \p \rightarrow \mathbb K; \quad (X,\dots ,Y) \mapsto d(X,\dots,Y) \ee
that is symmetric in its first $k$ and final $\ell$ slots. In view of the polarization formulae, such maps are in bijection with polynomials of two variables in $\p$, according to
\be p_{(d)} (X,Y) = d(\underset{k}{\underbrace{X,\dots,X}},\underset{\ell}{\underbrace{Y,\dots,Y}} ) \, .\ee
These polynomials are $(k,\ell)$-homogeneous, by which we mean that they are homogeneous of degree $k$ with respect to their first argument and of degree $\ell$ with respect to their second argument. We denote by $\K_{k,\ell}[\p,\p]$ the left $\h$-module of $(k, \ell)$-homogeneous polynomials on $\p$. Then for all $k, \ell \in {\mathbb N}$, $\left ( S_{k}(\p) \otimes S_{\ell}(\p) \right )^{\h}$ is in bijection with the submodule of $\h$-invariant $(k, \ell)$-homogeneous polynomials of $\K_{k,\ell}[\p,\p]^\h$. Similarly, $\left( S_k(\g) \otimes S_\ell(\g) \right)^\g$ is in bijection with $\K_{k,\ell}[\g, \g]^\g$. Therefore, it suffices to show that the restriction map from $\g$ to $\p$ maps $\K_{k,\ell}[\g, \g]^\g$ onto $\K_{k,\ell}[\p,\p]^\h$. By virtue of lemma \ref{gvv}, it will be sufficient to consider separately the cases of diagonal symmetric Lie algebras and of the symmetric simple Lie algebras listed in \ref{restriction}.

We recall that a diagonal symmetric Lie algebra is a pair $(\g, \theta)$, where $\g = {\mathfrak v} \oplus {\mathfrak v}$, for some semisimple Lie algebra ${\mathfrak v}$, and $\theta$ is the involutive automorphism of Lie algebras defined by $\theta (x,y) = (y,x)$, for all $(x,y) \in \g$. We thus have $\g = \h \oplus \p$, where $\h$ is the set elements of $\g$ of the form $(x,x)$, whereas $\p$ is the set of elements of $\g$ of the form $(x,-x)$, for $x \in \mathfrak v$. We are first going to prove that $\K_{k,\ell}[\p, \p]^\h \cong \K_{k,\ell}[\mathfrak{v}, \mathfrak{v}]^{\mathfrak{v}}$. Let $p \in \K_{k,\ell}[\p, \p]$ be a polynomial. For all $X,Y\in \p$, we have
\be p(X,Y)= p((x,-x),(y,-y)) = \tilde{p}(x,y) \, ,\ee
for some $x,y \in \mathfrak v$. The left $\h$-action on $\p$ induces a left $\h$-action on $\p \times \p$, given, for all $h \in \h$ and all $X,Y\in \p$, by
\be h \triangleright (X,Y) = (z,z) \triangleright ((x,-x),(y,-y)) = ((z \triangleright x, - z \triangleright x),(z \triangleright y, -z \triangleright y)) \, ,\ee
for some $x,y \in \mathfrak v$ and some $z \in \mathfrak v$; from which it obviously follows that $\tilde{p}$ is $\mathfrak v$-invariant if and only if $p$ is $\h$-invariant. Now, we are going to prove that the restriction map is a surjection from $\K_{k,\ell}[\g,\g]^\g$ onto $\K_{k,\ell}[\mathfrak{v}, \mathfrak{v}]^{\mathfrak{v}}$. Let $p \in \K_{k,\ell}[\g,\g]^\g$ be a $\g$-invariant polynomial on $\g$. The left $\g$-action on $\g \oplus \g$ is given, for all $g \in \g$ and all $X,Y \in \g$, by
\be g \triangleright (X,Y) = (g_1, g_2) \triangleright ((x_1,x_2),(y_1,y_2)) = ((g_1 \triangleright x_1, g_2 \triangleright x_2),(g_1 \triangleright y_1, g_2 \triangleright y_2))\, ,\ee
for some $g_1, g_2 \in \mathfrak v$ and some $x_1, x_2, y_1, y_2 \in \mathfrak v$. As one can always choose $g_1$ and $g_2$ independently, it follows that in order for $p$ to be $\g$-invariant, there must be a polynomial $f : \K \times \K \rightarrow \K$ and two $\mathfrak v$-invariant polynomials $p_1, p_2 \in \K_{k,\ell}[\mathfrak v, \mathfrak v]^{\mathfrak v}$ such that
\be p((x_1,x_2),(y_1,y_2)) = f(p_1(x_1, y_1), p_2(x_2, y_2)) \, , \ee
for all $x_1, x_2, y_1, y_2 \in \mathfrak v$. Now restricting $p$ to $\p$, we get
\be p((x_1,-x_1),(y_1,-y_1)) = f \left ( p_1(x_1, y_1), p_2(-(x_1, y_1)) \right ) = \tilde{p}(x_1,y_1) \in \K_{k,\ell} [\mathfrak v, \mathfrak v]^{\mathfrak v}\, , \ee
for all $x_1, y_1 \in \mathfrak v$. Now, it is obvious that every polynomial in $\K_{k,\ell} [\mathfrak v, \mathfrak v]^{\mathfrak v}$ can be obtained as the restriction to $\p$ of a polynomial in $\K_{k,\ell}[\g,\g]^\g$; {\it e.g.} take $p_2=0$, $f= \id$ and $p_1 = \tilde{p}$.

We are now going to consider the different symmetric simple Lie algebras listed in \ref{restriction}. Let us first consider the symmetric simple Lie algebras of type AI$_{n}$ for all $n >2$. In this case, we have $\g=\mathfrak{su}(n)$ endowed with an involutive automorphism $\theta$ given by complex conjugation, {\it i.e.} $\theta(x)=\bar{x}$, for all $x\in \mathfrak{su}(n)$. The fixed points of $\theta$ are traceless real antisymmetric matrices which generate an $\mathfrak{so}(n)$ subalgebra. We thus have the symmetric decomposition $\mathfrak{su}(n) = \mathfrak{so}(n) \oplus \p$, where the orthogonal complement $\p$ is the left $\mathfrak{so}(n)$-module generated by the traceless imaginary symmetric matrices of $\mathfrak{su}(n)$.
It follows from the first fundamental theorem for $\mathfrak{so}(n)$-invariant polynomials on $n\times n$ matrices, \cite{ProcesiPaper}, that $\K_{k,\ell}[\p, \p]^{\mathfrak{so}(n)}$ is generated by the following polynomials
\be \label{sopoly} (x,y) \in \p \times \p \rightarrow \tr \, P(x, y) \, , \ee
for all $(i,j)$-homogeneous noncommutative polynomial $P \in \K_{i,j}[X,Y]$, with $i \leq k$ and $j \leq \ell$. The polynomials defined in (\ref{sopoly}) are obviously restrictions to $\p$ of $\mathfrak{su}(n)$-invariant polynomials on ${\mathfrak{su}(n)}$ as, for all $P \in \K_{i,j}[X,Y]$ and all $x, y \in {\mathfrak{su}(n)}$,
\be (x,y) \rightarrow \tr \, P(x,y) \ee
defines an element in $\K_{ m, n} [{\mathfrak{su}(n)}, {\mathfrak{su}(n)}]^{\mathfrak{su}(n)}$. This proves lemma \ref{restriction} for simple symmetric Lie algebras of type $AI_{n>2}$. It is worth noting that in the case of AI$_{2}$, there exist obstructions to the above result which are related to the existence of a further $\mathfrak{so}(2)$-invariant with appropriate symmetries, namely the pfaffian $(x,y) \in \p \times \p \rightarrow \mbox{Pf}([x,y])$. As the latter is not the restriction to $\p$ of any $\mathfrak{su}(2)$ invariant on $\mathfrak{su}(2)$, lemma \ref{restriction} does not hold in this case.

We now turn to type AII$_n$. In this case, we have $\g = \mathfrak{su}(2n)$ endowed with an involutive automorphism $\theta$ given by the symplectic transpose, {\it i.e.}, for all $x \in \mathfrak{su}(2n)$, $\theta(x) = J x^{t} J$, where $J$ is a non-singular skew-symmetric $2n \times 2n$ matrix such that $J^2=-1$. The fixed point set of $\theta$ constitutes an $\mathfrak{sp}(2n)$ subalgebra and we have the following symmetric decomposition $\mathfrak{su}(2n)=  \mathfrak{sp}(2n) \oplus \p$, where $\p \subset \mathfrak{su}(2n)$ is the left $\mathfrak{sp}(2n)$-module of matrices $x \in \mathfrak{su}(2n)$ such that $\theta(x)=-x$. It follows from the first fundamental theorem for $\mathfrak{sp}(2n)$-invariant polynomials on $2n \times 2n$ matrices, \cite{ProcesiPaper}, that $\K_{k,\ell}[\p, \p]^{\mathfrak{sp}(2n)}$ is generated by the following polynomials
\be (x,y) \in \p \times \p \rightarrow \tr \, P(x,y) \, , \ee
for all noncommutative $(i,j)$-homogeneous polynomial $P \in \K_{i,j}[X,Y]$, with $i\leq k$ and $j \leq \ell$. These polynomials are restrictions to $\p$ of $\mathfrak{su}(2n)$-invariant polynomials on ${\mathfrak{su}(2n)}$ as, for all $P \in \K_{i,j}[X,Y]$ and all $x, y \in {\mathfrak{su}(2n)}$,
\be (x,y) \rightarrow \tr \, P(x,y) \ee
defines an element in $\K_{i, j} [{\mathfrak{su}(2n)}, {\mathfrak{su}(2n)}]^{\mathfrak{su}(2n)}$. This proves lemma \ref{restriction} for simple symmetric Lie algebras of type AII$_{n}$.

We finally consider the symmetric simple Lie algebras of type BDI$_{n,1}$ for all $n >2$. In this case, we have the symmetric pairs $({\mathfrak{so}}(n+1), {\mathfrak{so}}(n))_{n>2}$. We introduce the usual basis of ${\mathfrak{gl}}(n+1)$, {\it i.e.} the $(E_{ij})_{0 \leq i,j \leq n}$ defined as the $(n+1) \times (n+1)$ matrices with a $1$ at the intersection of the $i$-th row and $j$-th column and $0$ everywhere else. The matrices $M_{ij}=E_{ij}-E_{ji}$, $0 \leq i,j \leq n$, constitute a basis of ${\mathfrak{so}}(n+1)$, and of these, the $M_{ij}$ with $1 \leq i,j \leq n$ generate an ${\mathfrak{so}}(n)$ subalgebra. We thus have the symmetric decomposition ${\mathfrak{so}}(n+1) = {\mathfrak{so}}(n)\oplus \p$ where $\p$ is the $n$-dimensional ${\mathfrak{so}}(n)$-module spanned by the $P_i = M_{0,i}$, for all $1\leq i\leq n$. The $P_i$ transform under the fundamental representation ${\bf n}$ of $\mathfrak{so}(n)$, as can be checked from 
\be  M_{ij} \triangleright P_k = [M_{ij}, P_k ] = \delta_{jk} P_i - \delta_{ik} P_j \, ,\ee
for all $1\leq i,j,k \leq n$. This means that we are looking for $SO(n)$-invariant $(k,\ell)$-homogeneous polynomials on $\p \times \p = {\bf n} \times {\bf n}$. For all $n>2$, it follows from the first fundamental theorem for $\mathfrak{so}(n)$-invariant polynomials on vectors, \cite{Procesi, spivak}, that such polynomials only depend on the $SO(n)$ scalars built out of the scalar products of their arguments. Let $q$ be the quadratic form defined on $\p \times \p$ by $q(P_i, P_j) = \delta_{ij}$ for all $1 \leq i,j \leq n$. For all $p \in \K_{k,\ell}[\p,\p]^\h$, there exists a polynomial $f : \K^3 \rightarrow \K$ such that, for all $X, Y \in \p$,
\be
p(X,Y) = f(q(X,X),q(X,Y),q(Y,Y)) \, .
\ee
Now, it is obvious that $q$ is the restriction to $\p$ of the map
\be
\mathfrak{so}(n+1)\times \mathfrak{so}(n+1) \rightarrow  \K \, ; \quad (X,Y)  \rightarrow  -\frac{1}{2} \tr (XY) \, , \nonumber
\ee
which is $\mathfrak{so}(n+1)$-invariant. This proves the result for symmetric simple Lie algebras of type BDI$_{n>2,1}$. It is worth noting that in the case of BDI$_{2,1}$, there exist obstructions to the above result which are related to the existence of a further $SO(2)$ invariant, namely $(X,Y) \in \p \times \p \rightarrow \det(X,Y)$. As the latter is not the restriction to $\p$ of any $\mathfrak{so}(3)$ invariant, lemma \ref{restriction} does not hold in this case.

By virtue of the special isomorphisms between lower rank simple Lie algebras, the list of summands in lemma \ref{restriction} actually includes CII$_{1,1} = $ BDI$_{4,1}$ and BDI$_{3,3}=$ AI$_4$. The latter respectively correspond to the symmetric decompositions $\mathfrak{sp}(4) = \left (\mathfrak{sp}(2) \oplus \mathfrak{sp}(2) \right ) \oplus \p$ and $\mathfrak{so}(6) = \left (\mathfrak{so}(3) \oplus \mathfrak{so}(3) \right ) \oplus \p$.

\newpage


\begin{thebibliography}{9}
\small
\bibitem{Drinfeld}
Drinfel'd, V. G.
\textsl{On the structure of quasitriangular quasi-Hopf algebras,}
(Russian) Funktsional. Anal. i Prilozhen. 26 (1992), no. 1, 78--80;
translation in Funct. Anal. Appl. 26 (1992), no. 1, 63--65.

Drinfel'd, V. G.
\textsl{Quasi-Hopf algebras,}
(Russian) Algebra i Analiz 1 (1989), no. 6, 114--148; translation in
Leningrad Math. J. 1 (1990), no. 6, 1419--1457
17B37 (16W30 57M25 81T40).

\bibitem{Chari}
V.~Chari and A.~Pressley,
\textsl{A guide to quantum groups},
Cambridge University Press (1994).

\bibitem{Majidbook}
S.~Majid,
\textsl{Foundations of quantum group theory},
Cambridge University Press.

\bibitem{YZ}
  C.~A.~S.~Young and R.~Zegers, \textsl{On kappa-deformation and triangular quasibialgebra structure}, Nucl.\ Phys.\ B, in press, arXiv:0807.2745 [hep-th].

\bibitem{YZ2}
  C.~A.~S.~Young and R.~Zegers,
  \textsl{Covariant particle statistics and intertwiners of the kappa-deformed
  Poincare algebra,}
  Nucl.\ Phys.\  B {\bf 797} (2008) 537
  [arXiv:0711.2206 [hep-th]].

\bibitem{YZ3}
C.~A.~S.~Young and R.~Zegers,
\textsl{Covariant particle exchange for kappa-deformed theories in 1+1 dimensions},
  Nucl.\ Phys.\  B {\bf 804} (2008) 342
  [arXiv:0803.2659 [hep-th]].


\bibitem{Celeghini}
E.~Celeghini, R.~Giachetti, E.~Sorace and M.~Tarlini,
\textsl{The Three-dimensional Euclidean quantum group E(3)-q and its R matrix},
  J.\ Math.\ Phys.\  {\bf 32} (1991) 1159.

\bibitem{Lukierski}
  J.~Lukierski, H.~Ruegg, A.~Nowicki and V.~N.~Tolstoi,
  \textsl{Q-deformation of Poincare algebra},
  Phys.\ Lett.\  B {\bf 264} (1991) 331.

\bibitem{Lots}
J.~Lukierski, H.~Ruegg and W.~J.~Zakrzewski,
  \textsl{Classical Quantum Mechanics Of Free Kappa Relativistic Systems}
  Annals Phys.\  {\bf 243}, 90 (1995)
  [arXiv:hep-th/9312153].

P.~Kosinski, J.~Lukierski and P.~Maslanka,
  \textsl{Local D = 4 field theory on kappa-deformed Minkowski space,}
  Phys.\ Rev.\  D {\bf 62}, 025004 (2000)
  [arXiv:hep-th/9902037].

A.~Agostini, G.~Amelino-Camelia and F.~D'Andrea,
  \textsl{Hopf-algebra description of noncommutative-spacetime symmetries}
  Int.\ J.\ Mod.\ Phys.\  A {\bf 19} (2004) 5187
  [arXiv:hep-th/0306013].

M.~Dimitrijevic, L.~Jonke, L.~Moller, E.~Tsouchnika, J.~Wess and M.~Wohlgenannt,
  \textsl{Field theory on kappa-spacetime}
  Czech.\ J.\ Phys.\  {\bf 54}, 1243 (2004)
  [arXiv:hep-th/0407187].

H.~Grosse and M.~Wohlgenannt,
  \textsl{On kappa-deformation and UV/IR mixing,}
  Nucl.\ Phys.\  B {\bf 748}, 473 (2006)
  [arXiv:hep-th/0507030].

J.~Lukierski,
  \textsl{Quantum deformations of Einstein's relativistic symmetries,}
  AIP Conf.\ Proc.\  {\bf 861} (2006) 398
  [arXiv:hep-th/0604083].

S.~Kresic-Juric, S.~Meljanac and M.~Stojic,
  \textsl{Covariant realizations of kappa-deformed space}
  Eur.\ Phys.\ J.\  C {\bf 51} (2007) 229
  [arXiv:hep-th/0702215].

M.~Daszkiewicz, J.~Lukierski and M.~Woronowicz,
  \textsl{$\kappa$-Deformed Statistics and Classical Fourmomentum Addition Law,}
  arXiv:hep-th/0703200.

H.~C.~Kim, C.~Rim and J.~H.~Yee,
  \textsl{Symmetric ordering effect on Casimir energy in $\kappa-$Minkowski spacetime}
  arXiv:0803.2333 [hep-th].

  D.~Benedetti,
  \textsl{Fractal properties of quantum spacetime},
  arXiv:0811.1396 [hep-th].


\bibitem{Dixmier}
J.~Dixmier, \textsl{Enveloping algebras}, North Holland Publishing Company (1977).

\bibitem{Helgasonbook}
S.~Helgason,
\textsl{Differential geometry and symmetric spaces},
Academic Press (1962).

\bibitem{Cartan}
E.~Cartan,
\textsl{Sur certaines formes riemanniennes remarquables des g\'eom\'etries \`a groupe fondamental simple},
Ann.\ Sci.\ Ecole\ Norm.\ Sup.\ {\bf 44} (1927), 345-467.

\bibitem{Helgason}
S.~Helgason,
\textsl{Fundamental Solutions of Invariant Differential Operators on Symmetric Spaces,}
Amer.\ J.\ Math. {\bf 86}, (1964) no. 3, 565-601.

\bibitem{Burstalletal} F. E. Burstall, D. Ferus, F. Pedit and U. Pinkall,
\textsl{Harmonic tori in symmetric spaces and commuting Hamiltonian systems on loop algebras},
Ann.\ Math.\ , {\bf 138}, No. 1 (1993), 173-212.

  J.~M.~Evans,
\textsl{Integrable sigma-models and Drinfeld-Sokolov hierarchies},
  Nucl.\ Phys.\  B {\bf 608} (2001) 591
  [arXiv:hep-th/0101231].




\bibitem{Hoch}
G.~Hochschild,
\textsl{On the cohomology groups of an associative algebra},
Ann.\ Math.\ {\bf 46}, (1945) No. 1, 58-67.

G.~Hochschild,
\textsl{On the cohomology theory for associatvie algebras},
Ann.\ Math.\ {\bf 47}, (1946) No. 3, 568-579.

\bibitem{CEbook}
H.~Cartan and S.~Eilenberg,
\textsl{Homological algebra},
Princeton University Press (1956)

\bibitem{Weibel}
C.~A.~Weibel,
\textsl{An introduction to homological algebra},
Cambridge~studies~in~advanced~mathematics~38,
Cambridge University Press.

\bibitem{CE}
  C.~Chevalley and S.~Eilenberg,
  \textsl{Cohomology theory of Lie groups and Lie algebras,}
  Trans.\ Amer.\ Math.\ Soc.\ {\bf 63}  (1948)  pp. 85–124

\bibitem{Celeghini2}
E.~Celeghini, R.~Giachetti, E.~Sorace and M.~Tarlini,
\textsl{Three Dimensional Quantum Groups From Contraction Of $SU(2)_Q$},
J.\ Math.\ Phys.\  {\bf 31} (1990) 2548.

\bibitem{IWContractions}
E.~In\"onu and E.~P.~Wigner,
\textsl{On the contraction of groups and their representations},
Proc.\ Natl.\ Acad.\ Sci.\ U.S.A\ {\bf 39} (1953) 510-524.

E.~J.~Saletan,
\textsl{Contraction of Lie groups},
J.\ Math.\ Phys.\ {\bf 2} (1961) 1-21



\bibitem{Govindarajan}
  T.~R.~Govindarajan, K.~S.~Gupta, E.~Harikumar, S.~Meljanac and D.~Meljanac,
  \textsl{Twisted Statistics in kappa-Minkowski Spacetime},
  Phys.\ Rev.\  D {\bf 77} (2008) 105010
  [arXiv:0802.1576 [hep-th]].

  A.~Borowiec and A.~Pachol,
  \textsl{kappa-Minkowski spacetime as the result of Jordanian twist deformation},
  arXiv:0812.0576 [math-ph].


\bibitem{Zakrzewski}
  S.~Zakrzewski,
  arXiv:hep-th/9412099.


\bibitem{kPoind}
  J.~Lukierski and H.~Ruegg,
  \textsl{Quantum Kappa Poincare In Any Dimension,}
  Phys.\ Lett.\  B {\bf 329}, 189 (1994)
  [arXiv:hep-th/9310117].

\bibitem{bicross}
  S.~Majid and H.~Ruegg,
  \textsl{Bicrossproduct Structure Of Kappa Poincare Group And Noncommutative Geometry,}
  Phys.\ Lett.\  B {\bf 334} (1994) 348
  [arXiv:hep-th/9405107].



\bibitem{MajidSchroers}
  B.~J.~Schroers,
  \textsl{Lessons from (2+1)-dimensional quantum gravity},
  PoS {\bf QG-PH} (2007) 035
  [arXiv:0710.5844 [gr-qc]].


  S.~Majid and B.~J.~Schroers,
  \textsl{q-Deformation and Semidualisation in 3d Quantum Gravity},
  arXiv:0806.2587 [gr-qc].

\bibitem{Freidel}
  L.~Freidel, J.~Kowalski-Glikman and S.~Nowak,
  \textsl{Field theory on $\kappa$--Minkowski space revisited: Noether charges and breaking of Lorentz symmetry},
  Int.\ J.\ Mod.\ Phys.\  A {\bf 23} (2008) 2687
  [arXiv:0706.3658 [hep-th]].




\bibitem{Donin}
J.~Donin and S.~Shnider,
\textsl{cohomological construction of quantized universal enveloping algebras},
Trans.\ Am.\ Math.\ Soc.\ {\bf 349}, 1611 (1997).




\bibitem{ProcesiPaper}
C.~Procesi,
\textsl{The invariants of $n \times n$ matrices},
Adv.\ Math.\ {\bf 19}, (1976), 306-381.


\bibitem{Procesi}
H.~Kraft and C.~Procesi,
\textsl{Classical invariant theory},
available online at http://www.math.unibas.ch/~kraft/Papers/KP-Primer.pdf

\bibitem{spivak}
M.~Spivak,
\textsl{A comprehensive introduction to differential geometry}, Vol. 5, Publish or Perish.
\end{thebibliography}
\end{document}